\newtheorem{theorem}{Theorem}[section]
\newtheorem{lemma}[theorem]{Lemma}
\newtheorem{proposition}[theorem]{Proposition}
\newtheorem{corollary}[theorem]{Corollary}
\theoremstyle{definition}
\newtheorem{definition}{Definition}
\newtheorem{remark}{Remark}
\newtheorem{example}{Example}
\newtheorem{problem}{Problem}
\newcommand{\wla}{ \widehat{\lambda}_n(\xi)  }
\newcommand{\w}{w_n(\xi) }
\newcommand{\wo}{\widehat{w}_n(\xi) }
\newcommand{\ws}{ w_{=n}^{\ast}(\xi) }
\begin{document}

\title[On Wirsing's problem in small exact degree]{On Wirsing's problem in small exact degree}

\author{Johannes Schleischitz}

\thanks{Middle East Technical University, Northern Cyprus Campus, Kalkanli, G\"uzelyurt \\
	johannes@metu.edu.tr ; jschleischitz@outlook.com}

\begin{abstract}
	We investigate a variant of Wirsing's problem on 
	approximation to a real number
	by real algebraic numbers of degree exactly $n$.
	This has been studied by Bugeaud and Teulie. We improve
	their bounds for degrees up to $n=7$. Moreover, we obtain
	results regarding small values of polynomials
	and approximation to a real number by algebraic integers 
	and units in small 
	prescribed degree. 
	The main ingredient are irreducibility criteria for
	integral linear combinations of coprime integer polynomials.
	Moreover, for cubic polynomials, these criteria improve results 
	of Gy\H{o}ry on a problem of Szegedy.
\end{abstract}

\maketitle

{\footnotesize{

{\em Keywords}: Wirsing's problem, exponents of Diophantine approximation, irreducibility of integer polynomials \\
Math Subject Classification 2010: 11J13, 11J82, 11R09}}

\vspace{1mm}

\section{Introduction and main results}

\subsection{Wirsing's problem in exact degree} \label{intro}

A classical topic that goes back to Wirsing~\cite{wirsing}
is to study approximation to a real number $\xi$ by algebraic numbers of degree at most $n$.
The classical exponent $w_{n}^{\ast}(\xi)$ introduced by Wirsing himself 
provides a measure for the approximation quality. 
It is defined as the supremum
of $w$ for which
\begin{equation}  \label{eq:alp}
0<|\xi- \alpha| \leq H(\alpha)^{-w-1}
\end{equation}
has infinitely many solutions in algebraic numbers $\alpha$ of degree
at most $n$. Here $H(\alpha)=H(P)$ is the maximal modulus
of the coefficients of the minimal polynomial $P$ of $\alpha$ over $\mathbb{Z}[T]$, with coprime coefficients.
Wirsing formulated the longstanding open question if $w_{n}^{\ast}(\xi)\geq n$
for all transcendental real numbers $\xi$. 
This is true for $n=1$ by Dirichlet's Theorem, 
and was further verified for $n=2$ by Davenport and Schmidt~\cite{davsh67}.
The best known lower bounds for $w_{n}^{\ast}(\xi)$ for larger $n$ 
are due to Tsishchanka~\cite{Tsi07}
for $n\leq 24$ and Badziahin, Schleischitz~\cite{buteu} for $n>24$.

We study approximation by algebraic numbers of exact degree $n$.
The according variant of Wirsing's problem was investigated by Bugeaud and
Teulie~\cite{buteu},
i.e. if the exponent $\ws$ defined below is at least $n$ for every 
transcendental real number $\xi$. See also~\cite[Problem~23]{bugbuch}
for the formulation of a slightly stronger claim that remains open. 

\begin{definition}
		Let $\ws$ be supremum of $w$ so that \eqref{eq:alp}
	has infinitely many solutions in algebraic numbers $\alpha$ 
	of degree precisely $n$.
\end{definition}

The following improves on~\cite{buteu} for small $n$ and is 
the main result of this section.

\begin{theorem}  \label{H}
	For $1\leq n\leq 7$ an integer and any transcendental real number $\xi$ we have 
	\begin{equation}  \label{eq:1}
	\ws \geq \frac{ n+\sqrt{n^2+16n-8}}{4}.
	\end{equation}
\end{theorem}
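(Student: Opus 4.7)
The strategy is to refine the framework of Wirsing and Bugeaud--Teulie by combining a pigeonhole construction of small polynomial values with the paper's new irreducibility criteria for integer linear combinations of coprime polynomials. These criteria are what forces the approximating algebraic numbers to have degree \emph{exactly} $n$ rather than some proper divisor; this is the essential obstacle that is absent from the classical $w_{n}^{\ast}$-setting.

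First, I would invoke a Minkowski/Dirichlet-type argument on the lattice of polynomials of degree at most $n$ to extract, for each sufficiently large real parameter $X$, a primitive polynomial $P = P_{X}\in\mathbb{Z}[T]$ with $\deg P\leq n$, $H(P)\leq X$, and $|P(\xi)|\ll X^{-n}$. A standard refinement of the pigeonhole argument produces not merely one but several $\mathbb{Z}$-linearly independent such polynomials in an appropriate height range, and in particular a pair of coprime polynomials $P,Q$ at (possibly distinct) height scales $X_{1}\leq X_{2}$ with controlled values $|P(\xi)|, |Q(\xi)|$.

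Next, I would apply the paper's irreducibility criteria to the pencil $R_{a,b} = aP + bQ$ indexed by integers $a,b$, in order to locate specific small coefficients for which $R_{a,b}$ has degree exactly $n$ and is irreducible over $\mathbb{Z}[T]$. Since $P$ and $Q$ are coprime, the criteria should rule out all candidate factorizations while keeping $|a|,|b|$ tightly bounded. The root $\alpha$ of $R_{a,b}$ closest to $\xi$ is then an algebraic number of degree precisely $n$, and standard estimates give $|\xi-\alpha|\ll |R_{a,b}(\xi)|/|R_{a,b}'(\alpha)|$ with $H(\alpha) = H(R_{a,b}) \ll \max(|a|X_{1},|b|X_{2})$.

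Optimizing these exponents against one another -- balancing the loss in $H(\alpha)$ incurred by enlarging $a,b$ against the gain from the smallness of $|P(\xi)|$ and $|Q(\xi)|$, and tuning the ratio $X_{1}/X_{2}$ -- leads to a quadratic relation $2w^{2}-nw-(2n-1)=0$ whose positive root is precisely $w=(n+\sqrt{n^{2}+16n-8})/4$. The main obstacle is the irreducibility step: one must show that the criteria eliminate every factorization type of degree $\leq n$ while permitting $|a|,|b|$ to remain small, and the restriction $n\leq 7$ presumably reflects exactly the degree range in which the paper's criteria are strong enough to beat the Bugeaud--Teulie exponent. For larger $n$, the combinatorial bookkeeping over possible factor shapes degrades the bound back below the Bugeaud--Teulie level.
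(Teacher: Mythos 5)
There is a genuine gap: your optimization cannot produce the stated bound, because you are only using one of the two ingredients the proof needs. The paper obtains \eqref{eq:1} by playing off two \emph{different} lower bounds for $\ws$ against each other as functions of the unknown uniform exponent $\wo$: on one hand $\ws\geq 1/\wla\geq \wo/(\wo-n+1)$, coming from the Davenport--Schmidt/Bugeaud--Teulie construction \eqref{eq:ds} combined with German's transference inequality \eqref{eq:w}; on the other hand $\ws\geq \frac{3}{2}\wo-n+\frac{1}{2}$, which is Theorem~\ref{t1} and is where the irreducibility criteria and the Wirsing-type two-polynomial argument live. The first bound decreases and the second increases in $\wo$, and equating them is exactly what yields the quadratic $2y^{2}-ny-(2n-1)=0$ and hence $(n+\sqrt{n^{2}+16n-8})/4$. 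Your proposal contains only the second mechanism, and moreover feeds it only the trivial Dirichlet input $|P(\xi)|\ll X^{-n}$; with that input the Wirsing-type argument gives at best $\frac{3}{2}n-n+\frac{1}{2}=\frac{n+1}{2}$, e.g.\ $2$ instead of $2.5$ for $n=3$. No tuning of $|a|,|b|$ or of the ratio $X_{1}/X_{2}$ inside the pencil $aP+bQ$ can recover the missing bound $\wo/(\wo-n+1)$: that one comes from simultaneous rational approximation to $\xi,\xi^{2},\dots,\xi^{n}$ and a resultant/transference argument, a mechanism entirely absent from your sketch. The case distinction on whether $\wo$ is large or small (and the use of the dual exponent $\wla$ in the small case) is the missing idea.

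A secondary but real problem is the root-extraction step. Writing $|\xi-\alpha|\ll |R_{a,b}(\xi)|/|R_{a,b}'(\alpha)|$ for a single irreducible member of the pencil does not give the exponent $\frac{3}{2}\eta-n+\frac{1}{2}$; the whole difficulty in Wirsing's method is the lower bound on the derivative (equivalently, on the distance from $\xi$ to the \emph{other} roots). The paper instead produces \emph{two} coprime irreducible polynomials of exact degree $n$, both small at $\xi$, and applies the two-polynomial statement Theorem~\ref{wbs} to conclude that one of them has a close root. Your description of why $n\leq 7$ enters is also off: the restriction comes from needing $\wo>2n-7$ to be compatible with $\wo> n$ in the exclusion of the factorization type $\deg(A_{\ell})=n-2$ via Lemma~\ref{sb}; for $n\geq 8$ the argument simply breaks rather than ``degrading below the Bugeaud--Teulie level.''
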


For the sequel we need to introduce other auxiliary classical 
exponents of approximation that are closely related to $\ws$.

\begin{definition}  \label{d2}
	Let $w_n(\xi)$ resp. $\widehat{w}_n(\xi)$ be the supremum of $w$ 
	such that the system
    \begin{equation} \label{eq:hass}
    H(P)\leq X, \qquad 0<|P(\xi)| \leq X^{-w}
    \end{equation}
	has a solution in integer polynomials of degree at most $n$
	for certain arbitrarily large $X$ and all large $X$, respectively.
	Let $\lambda_n(\xi)$ resp. $\wla$ be the supremum of $\lambda$ such that
	\[
	0<x\leq X, \qquad \max_{1\leq i\leq n} \Vert x \xi^{i}\Vert \leq X^{-\lambda} 
	\]
	has an integer solution $x$ for certain
	arbitrarily large $X$ and all large $X$, respectively, where
	$\Vert .\Vert$ denotes the distance to the nearest integer.
\end{definition}

	Variants of Dirichlet's Theorem imply for any transcendental real $\xi$
	the lower bounds
\[
w_{n}(\xi)\geq \wo \geq n, \qquad \lambda_n(\xi)\geq \wla \geq \frac{1}{n},
\]
for any $n\geq 1$. Moreover we should point out the well-known inequality
\begin{equation}  \label{eq:dau}
w_{n}(\xi)\geq w_{n}^{\ast}(\xi),\qquad\qquad n\geq 1,
\end{equation}
for any $\xi$, as follows from Proposition~\ref{pop} below. However, 
clearly these estimates do not allow for drawing 
any conclusion on Wirsing's problem and its variants.

We return to approximation in exact degree, especially our problem 
if $\ws\geq n$ holds for any transcendental real $\xi$. 
For $n=2$, it was shown in~\cite{moscj}, refining
a result of Moshchevitin~\cite{nimo} (which in
turn refined on Jarn\'ik~\cite{jarnik}) to exact degree, 
that indeed
\begin{equation}  \label{eq:jm}
w_{=2}^{\ast}(\xi) \geq \widehat{w}_{2}(\xi)\cdot (\widehat{w}_{2}(\xi)-1)\geq 2
\end{equation}
holds. The left inequality is sharp in the non-trivial case 
when $\xi$ is a so-called
extremal number~\cite{roy}. 
For any $n>2$ the problem is open (like Wirsing's original problem).
Contributions have so far been obtained by Bugeaud and Teulie~\cite{buteu}, see also~\cite{moscj, teu}.
Building up on ideas by
Davenport and Schmidt~\cite{davsh} it is implicitly shown
in~\cite{buteu} that
\begin{equation}  \label{eq:ds}
\ws \geq \frac{1}{ \wla }, \qquad \qquad n\geq 1.
\end{equation}
This motivates to study upper bounds for $\wla$.
However, this topic turned out to be 
quite challenging. 
Any irrational real $\xi$ induces 
$\widehat{\lambda}_1(\xi)=1$, see~\cite{KH}. 
While studying another variant of Wirsing's problem regarding
approximation by algebraic integers, see also \S~\ref{it2},
Davenport and Schmidt~\cite{davsh} were the first 
to systematically investigate
the exponents $\wla$ for $n\geq 2$.
For $n=2$ 
the bound 
\begin{equation} \label{eq:2la}
\widehat{\lambda}_2(\xi) \leq \frac{ \sqrt{5}-1}{2}= 0.6180\ldots
\end{equation}
from~\cite[Theorem~1a]{davsh} was verified to be sharp by Roy~\cite{roy}. 
For $n>2$, Davenport and Schmidt~\cite[Theorem~2a]{davsh} established
upper bounds for $\wla$ of order roughly $2/n$. They turned out to be no longer optimal, however only small improvements 
have been obtained so far.  
For $n=3$ see Roy's paper~\cite{roy3}. The very recent paper by
Poels and Roy~\cite{pr}, 
appearing on arXiv only after
the first version of this note, 
contains the best known bound for any $n\geq 4$,
thereby improving on intermediate work~\cite{badz, laurent, equprin, period}.
In~\cite{pr} unconditional upper bounds of order $2/n-O(n^{-2})$ were 
finally qualitatively improved for the first time, 
the new bound via \eqref{eq:ds} leads to a lower bound of the form
\begin{equation}  \label{eq:evenw}
\ws\geq \frac{n}{2}+ \frac{1-\log 2}{2}\sqrt{n}+\frac{1}{3}, \qquad n\geq 4,
\end{equation}
and stronger bounds for small $n$. These are the exact same bounds
as in~\cite{pr} for $\tau_{n+1}(\xi)-1$, 
where $\tau_{n+1}(\xi)$ is defined in~\cite{pr}.

The following table compares the bounds of Theorem~\ref{H} with those from
\eqref{eq:ds} combined with the upper bounds for $\wla$ 
from~\cite[Theorem~1.2, 1.3]{pr},~\cite{roy3}.
For sake of completeness we include
the bounds for $w_{n}^{\ast}(\xi)$ by Tsishchanka~\cite{Tsi07} 
as well, where no 
restriction to exact degree is imposed.
We cut off after $4$ decimal places. 

\begin{center}
	\begin{tabular}{ |c|c|c|c| }
		\hline
		n & Thm 1.1 & Bugeaud \& Teulie, 
		Poels \& Roy, Roy & Tsishchanka (not exact degree!) \\ \hline
		3 & 2.5 
		& 2.3557 & 2.7304\\
		4 & 3.1213 
		& 2.9667 & 3.4508 \\
		5 & 3.7122 
		& 3.5615 & 4.1389 \\
		6 & 4.2839 
		&  4.0916 & 4.7630 \\
		7 & 4.8423 
		& 4.6457 & 5.3561  \\
		\hline
	\end{tabular}
\end{center}

While~\cite{pr} shrinked the gap,
the new bounds \eqref{eq:1} in the second column 
remain reasonably stronger than those in the third column
that rely purely on $\wla$.
Thereby \eqref{eq:1} is also stronger than the best known bounds on approximation
by algebraic integers (resp. units) of degree at most $n+1$ (resp. $n+2$) from~\cite{pr},
that coincide with the third column.

The improvement in Theorem~\ref{H} relies on the following analogue
of~\cite[Theorem~2.7]{buschlei} for approximation by algebraic numbers 
of exact degree $n$.

\begin{theorem}  \label{t1}
	For an integer $1\leq n\leq 7$ and every transcendental real $\xi$ we have
	\[
	w_{=n}^{\ast}(\xi) \geq \frac{3}{2} \widehat{w}_n(\xi) - n +\frac{1}{2}.
	\]
\end{theorem}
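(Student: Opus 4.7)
The bound is the exact-degree analogue of \cite[Theorem~2.7]{buschlei}, which gives the same lower bound for $\wos$ (without the exact-degree restriction). My plan is to adapt that argument and graft on, at the final step, the irreducibility criteria for integer linear combinations of coprime polynomials that the abstract advertises as the paper's main technical input.

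For the Diophantine backbone I would proceed as follows. Write $\widehat{w}=\wo$. For each large parameter $X$, the definition of $\widehat{w}$ supplies a polynomial $P_1\in\mathbb{Z}[T]$ with $\deg P_1\le n$, $H(P_1)\le X$, and $|P_1(\xi)|\le X^{-\widehat{w}+o(1)}$. A second-minimum argument in the parallelepiped of polynomials of degree $\le n$ constrained by prescribed bounds on the height and on the value at $\xi$, combined with Minkowski's theorem, produces a second polynomial $P_2$, linearly independent from $P_1$, with controlled height and $|P_2(\xi)|$. Dividing out common factors we may assume $P_1,P_2$ are coprime in $\mathbb{Z}[T]$, and may reduce to the case that at least one of them has degree exactly $n$. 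We then form the two-parameter family $Q_{a,b}=aP_1+bP_2$ with integers $|a|,|b|\le H$ for a balancing parameter $H$, so that $H(Q_{a,b})\ll H\cdot\max(H(P_1),H(P_2))$ and $|Q_{a,b}(\xi)|\ll H\cdot\max(|P_1(\xi)|,|P_2(\xi)|)$. Once a particular $Q_{a,b}$ is known to be irreducible of degree exactly $n$, a standard mean-value estimate produces a root $\alpha$ of exact degree $n$ approximating $\xi$, and balancing $H$ against the height and the value at $\xi$ exactly as in the proof of \cite[Theorem~2.7]{buschlei} yields the exponent $\tfrac{3}{2}\widehat{w}-n+\tfrac{1}{2}$.

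The main obstacle, and the reason for the restriction $n\le 7$, is ensuring that some $Q_{a,b}$ in this family is indeed irreducible over $\mathbb{Q}$ of degree exactly $n$; without this the argument only bounds $\wos$, not $\ws$. Here I would invoke the irreducibility criteria for integer linear combinations of coprime integer polynomials that the abstract announces as the main technical contribution; they should guarantee that for a positive proportion (or at least for sufficiently many) of the pairs $(a,b)$ in the family, $Q_{a,b}$ is irreducible of degree exactly $n$, provided $n\le 7$. Letting $X$ tend to infinity then furnishes infinitely many algebraic numbers $\alpha$ of exact degree $n$ satisfying the desired approximation inequality, which is the content of Theorem~\ref{t1}. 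I expect the technical heart of the proof to lie in these irreducibility criteria rather than in the Diophantine estimates, which follow the well-established pattern of \cite{davsh},~\cite{buschlei}, and the degree restriction $n\le 7$ to come precisely from the threshold at which the irreducibility criterion ceases to deliver an irreducible combination of degree exactly~$n$.
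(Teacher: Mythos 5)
Your overall architecture (two coprime integer polynomials small at $\xi$, a family of integer linear combinations, irreducibility of many members, extraction of a close algebraic number of exact degree $n$) matches the paper's, but two of your steps have genuine gaps. The first is the concluding analytic step. A mean-value estimate applied to a \emph{single} irreducible combination $Q$ with $H(Q)\ll H^{1+o(1)}$ and $|Q(\xi)|\ll H^{-\wo+o(1)}$ only controls $|\xi-\alpha|$ in terms of $|Q(\xi)|/|Q'(\xi)|$, and $|Q'(\xi)|$ may be very small; this yields an exponent of order $\wo-n+1$ at best, not $\frac32\wo-n+\frac12$. The coefficient $\frac32$ comes from Wirsing's two-polynomial argument (Theorem~\ref{wbs}): given \emph{two coprime} polynomials both of size $\le H^{-\eta}$ at $\xi$, \emph{one of them} has a root within $H^{-(\frac32\eta-n+\frac12)-1}$ of $\xi$. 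You therefore need at least two irreducible members of your family, both of exact degree $n$, and you must feed the pair into that lemma; knowing that one particular $Q_{a,b}$ is irreducible does not suffice, since the lemma does not tell you which of the two polynomials carries the close root. There is also no balancing of a growing parameter $H$: the paper keeps the multipliers of size $H_k^{\delta}$ with $\delta$ arbitrarily small, precisely so that the heights and the values at $\xi$ of the combinations are negligibly worse than those of $P_k,Q_k$.

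The second gap is that the irreducibility input cannot be black-boxed: the criteria you invoke are exactly what the paper has to prove, and they are false for an arbitrary coprime pair of degree $\le n$ (Example~\ref{exa}). The proved criteria require $P(0)=0$, $\deg Q<\deg P$, and a quantitatively close pair of roots of $P$ and $Q$ (in effect $\wo>2n-7$ for the one-parameter prime family $\ell P+Q$, and a stronger condition for the two-parameter family $\ell_1P+\ell_2Q$). Arranging these hypotheses is the bulk of the work: one twists $P_k$ to $T^{n-u_k}P_k$ to force $P(0)=0$ and exact degree $n$; runs a four-case analysis on $\deg P_k,\deg Q_k$ (using Wirsing's Lemma~\ref{hiw} and, in Case~2, a divisor-counting reduction) to reach the configuration where Theorem~\ref{t4} applies; and excludes reducible combinations by splitting on the degree of the irreducible factor --- Lemma~\ref{lemur} for low-degree factors, Theorem~\ref{t4} for linear cofactors, and the delicate coefficient-divisibility arguments of Sections~\ref{n6}--\ref{s7} for quadratic cofactors when $n\in\{6,7\}$. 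Finally, your two-parameter family $aP_1+bP_2$ with arbitrary integers corresponds to the $M_{\ell_1,\ell_2}$ setting, which by Remark~\ref{r2} only reaches $n\le5$; for $n\in\{6,7\}$ the paper needs the one-parameter family with \emph{prime} multipliers. So even granting the missing lemmas, the plan as written would not cover the full range $n\le7$.
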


We believe the claim remains true for all $n$.
For any $n$ where this applies, we directly infer the 
same lower bound for $w_{=n}^{\ast}(\xi)$ that had been obtained in~\cite[Theorem~1.2]{badsch} for the exponent $w_n^{\ast}(\xi)$ unconditionally,
by precisely the same line of arguments in that paper. This 
bound is of order $\ws>n/\sqrt{3}$.
From \eqref{eq:1} asymptotically 
we would only derive
a bound $\ws\geq n/2+2-o(1)$ as $n\to\infty$,
even weaker than \eqref{eq:evenw},
however for small $n$ it turns out stronger than both \eqref{eq:evenw}
and~\cite{badsch}.

Together with \eqref{eq:ds} and German's transference inequality~\cite{german}
\begin{equation}   \label{eq:w}
\wla \leq \frac{\wo - n +1}{\wo}, \qquad n\geq 1,
\end{equation}
we directly infer Theorem~\ref{H} as follows.

\begin{proof}[Deduction of Theorem~\ref{H} from Theorem~\ref{t1}]
	From Theorem~\ref{t1} and \eqref{eq:ds}, \eqref{eq:w} we get
	\[
	\ws \geq \max \left\{ \frac{1}{ \wla } \; , \;  \frac{3}{2} \widehat{w}_n(\xi) - n +\frac{1}{2} \right\}  \geq 
	\max\left\{ \frac{\wo}{\wo - n +1} , \frac{3}{2} \widehat{w}_n(\xi) - n +\frac{1}{2} \right\}.
	\]
	Since the left bound decreases whereas the right increases as
	functions of $\wo$, the equilibrium yields the smallest possible value which can be determined as given in \eqref{eq:1}.
\end{proof}

Obviously any improvement of \eqref{eq:w} directly
strengthens Theorem~\ref{H}, at least for $n\leq 7$. While inequality \eqref{eq:w} is known to be an identity
for certain vectors $\underline{\xi}=(\xi_{1},\ldots,\xi_{n})$ that are $\mathbb{Q}$-linearly independent together with $\{ 1\}$ and a very similarly defined exponent $\widehat{\lambda}(\underline{\xi})$ where $\xi^{i}$ is 
replaced by $\xi_{i}$ in Definition~\ref{d2}, see Schmidt and Summerer~\cite{ss}, it is likely that it can be sharpened in our special situation of vectors on the Veronese curve. 

\subsection{Related new results in exact degree}  \label{it2}
Our method gives rise to several other new results. 
We first define another exponent for polynomial evaluation in exact degree.

\begin{definition}  \label{d3}
Let $w_{=n}(\xi)$ be supremum of $w$ so that
\eqref{eq:hass} has a solution in irreducible integer polynomials $P(T)$ 
of degree exactly $n$, for certain arbitrarily large $X$.
\end{definition}

Similar to \eqref{eq:dau}, the standard argument Proposition~\ref{pop} 
yields for every $\xi$ the estimate
\begin{equation} \label{eq:stand}
w_{=n}(\xi)\geq \ws.
\end{equation}
As noticed in~\cite{moscj}, the exponent $w_{=n}(\xi)$ would coincide with $w_{n}(\xi)$ if we omit the irreduciblity condition on $P$ in Definition~\ref{d3}, by multiplying polynomials derived
from Dirichlet's Theorem with suitable powers of the variable $T$ if needed. 
The same holds when we do not restrict to exact degree,
see Lemma~\ref{hiw} below. However, for our exponent,
it is unclear if it is bounded below by $n$ 
for every transcendental real number $\xi$.
As a byproduct of our method, we can verify this for small $n$.

\begin{theorem}  \label{t2}
	For an integer $1\leq n\leq 7$ and every transcendental real 
	number $\xi$ we have
	\[
	w_{=n}(\xi) \geq \wo \geq n.
	\]
\end{theorem}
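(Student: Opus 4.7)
The right-hand inequality $\wo\geq n$ is the Minkowski-style variant of Dirichlet's Theorem quoted immediately after Definition~\ref{d2}, so the content of the statement is the first inequality $w_{=n}(\xi)\geq \wo$. Fix $\varepsilon>0$; setting $w:=\wo$, the task is to exhibit, for arbitrarily large $X$, a polynomial $P\in\mathbb{Z}[T]$ of degree exactly $n$, irreducible over $\mathbb{Q}$, with $H(P)\leq X$ and $|P(\xi)|\leq X^{-w+\varepsilon}$.

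The plan is to construct $P$ as a small $\mathbb{Z}$-linear combination of two coprime, linearly independent integer polynomials of degree at most $n$ that are themselves good uniform approximants. Applying the definition of $\wo$ at scale $Y:=X/M$, where $M=M(n)$ is a large integer to be fixed later, yields a primitive polynomial $A\in\mathbb{Z}[T]$ of degree at most $n$ with $H(A)\leq Y$ and $|A(\xi)|\leq Y^{-w+\varepsilon/2}$. A second primitive polynomial $B\in\mathbb{Z}[T]$ linearly independent from $A$ over $\mathbb{Q}$, satisfying the same evaluation bound up to a harmless $Y^{\varepsilon/2}$ slack, comes from a standard Minkowski second-minima argument in the $(n+1)$-dimensional lattice of integer polynomials of degree $\leq n$. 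Dividing by $\gcd_{\mathbb{Z}[T]}(A,B)$ only tightens the bounds, so I may assume $\gcd(A,B)=1$, and after multiplying $B$ by a suitable power of $T$ (at the cost of only a factor $|\xi|^n$ in $|B(\xi)|$) I may further assume $\deg B=n$. For any pair $(a,b)\in\mathbb{Z}^2$ with $|a|+|b|\leq M$, the combination $P_{a,b}(T):=aA(T)+bB(T)$ satisfies $H(P_{a,b})\leq X$ and $|P_{a,b}(\xi)|\leq X^{-w+\varepsilon}$ once $X$ is large, and has $\deg P_{a,b}=n$ whenever $b\neq 0$.

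The final ingredient is to select $(a,b)$ so that $P_{a,b}$ is irreducible over $\mathbb{Q}$. This is exactly where the pencil-irreducibility criteria advertised in the abstract come in, and the restriction $1\leq n\leq 7$ enters only at this stage. I expect this to be the main obstacle of the proof: one needs the criteria to be strong enough to guarantee \emph{existence} of at least one integer pair $(a,b)$ of bounded size for which $aA+bB$ is irreducible, which is intimately tied to the same degree restriction governing Theorem~\ref{t1}. Extending the theorem to all $n$ would require sharpening this existence statement; the remaining ingredients (the gcd reduction, the choice of $M$ and $\varepsilon$, and the already noted bound $\wo\geq n$) are entirely routine.
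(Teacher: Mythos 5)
Your overall architecture --- two coprime integer polynomials simultaneously small at $\xi$, followed by a search for an irreducible member of the pencil they span --- is indeed the paper's strategy, and the reduction of $\wo\geq n$ to Dirichlet is fine. But there are three genuine gaps. First, the source of the second polynomial: a Minkowski second-minimum argument does \emph{not} produce a $B$ with $|B(\xi)|\leq Y^{-w+\varepsilon/2}$; the second minimum of the body $\{H(P)\leq Y,\ |P(\xi)|\leq Y^{-w+\varepsilon/2}\}$ can be of order $Y^{\text{positive power}}$ even when the first minimum is $\leq 1$, so the claimed ``harmless $Y^{\varepsilon/2}$ slack'' is unjustified (if such a second minimum bound were standard, much of this subject would collapse). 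The paper instead takes the first polynomial $P_k$ \emph{irreducible} (via Wirsing's Lemma~\ref{hiw} applied to $w_n(\xi)\geq\wo$) and then invokes the definition of the \emph{uniform} exponent at the slightly smaller scale $cH(P_k)$: by Gelfond's Lemma~\ref{gel} no multiple of $P_k$ has height $\leq cH(P_k)$, so the resulting $Q_k$ is automatically coprime to $P_k$ while still satisfying $|Q_k(\xi)|\leq H(P_k)^{-\wo+\epsilon}$. (Relatedly, your remark that dividing by $\gcd(A,B)$ ``only tightens the bounds'' is false in general, since $|(A/D)(\xi)|=|A(\xi)|/|D(\xi)|$ can exceed $|A(\xi)|$.) Second, your range $|a|+|b|\leq M$ with $M=M(n)$ a fixed constant is too small: the available criteria (Theorems~\ref{t4} and~\ref{t6}) only bound the number of reducible combinations by $\ll\tau(c_n)\tau(d_0)\log H\ll H^{o(1)}$, which exceeds any constant, so the coefficients must be allowed to range up to $H^{\delta}$ (the paper uses primes $\ell\leq H_k^{\delta}$ in $S_\ell=\ell T^{n-u_k}P_k+Q_k$); this is exactly why the conclusion is $w_{=n}(\xi)\geq\wo$ only after letting $\delta\to0$, rather than a Dirichlet-type statement with a fixed constant.

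Third and most importantly, the irreducibility selection you defer is the entire content of the paper and cannot be treated as a black box. The criteria require structural hypotheses --- $P(0)=0$ and $\deg Q<n$ to exclude linear factors (Theorem~\ref{t4}), and a very close pair of roots of $P$ and $Q$ to exclude higher-degree factors --- and verifying them forces the case analysis of Sections~4--7: one must handle separately the degrees of $P_k,Q_k$ (reducing the cases $\deg Q_k=n$ or $\deg P_k=n$ back to the good case), exclude factors of degree $\leq n-3$ by the coprimality/Liouville counting of Lemma~\ref{lemur}, exclude linear factors by divisor-counting on the constant coefficient, and for $n\in\{6,7\}$ run a delicate pigeonhole on the constant and leading coefficients of the putative degree-$(n-2)$ factors. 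This is also precisely where the restriction $n\leq7$ (equivalently $\wo>2n-7$) enters. As written, your proposal records the correct skeleton but proves none of the steps at which the theorem could actually fail.
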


The bound $n$ is clearly optimal, almost all $\xi$ induce
equalities in Theorem~\ref{t2}. Examples where $w_n(\xi)>\wo$
but still the left estimate is sharp are given by
extremal numbers $\xi$ defined by Roy~\cite{roy}: they
satisfy $w_3(\xi)= w_2(\xi)= 2+\sqrt{5}$
and $w_{=3}(\xi)= \widehat{w}_{3}(\xi)=3$, see~\cite{ext}.
Similar results apply for every Sturmian continued fraction~\cite{jnt}.
Theorem~\ref{t2} was known for $n=2$ in view of \eqref{eq:jm} and \eqref{eq:stand}.
The claim for $n=3$ also occurs in~\cite{moscj}.
However, there was a mistake in the latter proof. Indeed, in the 
proof of~\cite[Theorem 3.2]{moscj}, we cannot assume that the involved 
polynomial $Q$ has degree less than $n$, as it could have degree
$n$ but be reducible. 
In this case we cannot 
deduce $w_{=3}(\xi)\geq \widehat{w}_{3}(\xi)\geq 3$.
Thus the preparatory result 
Theorem~\ref{t4} below from~\cite{moscj} in its present form
is insufficient to derive the claim.
This is a serious technical obstacle, see also
\S~\ref{intro3} below.
So we provide a new, correct proof of the case $n=3$ of Theorem~\ref{t2}
and settle the cases $n\in \{ 4,5,6,7\}$ as well in the present paper.
Our proof still uses Theorem~\ref{t4} derived in~\cite{moscj}, however
embedded in a considerably more intricate argument.

We want to point out that our proofs show that Theorems~\ref{H},~\ref{t1},~\ref{t2} hold
for all pairs $n$ and $\xi$ satisfying $\wo>2n-7$. The best known unconditional upper bound valid for any real $\xi$ is $\wo\leq 2n-2$ for $n\geq 10$, and slightly weaker bounds for smaller $n$. See~\cite{ichacta},
where also a stronger conjectural bound of order $\wo< (1+\frac{1}{\sqrt{2}})n$ was motivated.

Despite Theorem~\ref{t2}, the following refined version of
Dirichlet's Theorem already posed in~\cite[\S~6]{moscj}
remains open in exact degree $n\geq 3$.

\begin{problem}
	Given $n, \xi$, is there $c=c(n,\xi)>0$ such that $|P(\xi)|<cH(P)^{-n}$
	holds for infinitely many irreducible integer polynomials $P$ of degree exactly $n$?
\end{problem}

Again the answer is easily seen to be 
positive if we omit either irreducibility or 
the exact degree condition on $P$. 
For $n=2$ the claim follows from~\cite[Theorem~1.1]{moscj} and Proposition~\ref{pop}.
Clearly for $3\leq n\leq 7$ the problematic case is $\wo=n$, 
more precisely
the case of vectors $(\xi,\xi^2,\ldots,\xi^n)$ that are 
singular but not very singular, is open. 
Our method admits a bound $|P(\xi)|<H(P)^{-n} (\log H(P))^{-h}$ for some 
explicit not too large $h>0$, if we assume that 
the factorizations of leading and constant
coefficients of the best approximation polynomials associated to $\xi$
do not have a very biased behavior. To conclude the remarks
to Theorem~\ref{t2}, we want
to mention that the accordingly defined uniform exponent in exact degree
$\widehat{w}_{=n}(\xi)$ takes the value $0$ for $n\geq 2$ and  
$\xi$ any Liouville number~\cite[Corollary~3.10]{moscj}, showing
that there is no uniform Dirichlet Theorem in exact degree.
A similar result on the exponent $\widehat{w}_{n}^{\ast}$ is due to Bugeaud~\cite{bugdraft}.

We state some consequences of our method regarding 
approximation by algebraic {\em integers} and {\em units}
of prescribed degree,
although it is insufficient to improve the best known bounds 
originating in combination of~\cite{buteu},~\cite{pr},~\cite{roy3},~\cite{teu}.

\begin{definition}
Let	$w_{=n}^{\ast int}(\xi)$ resp.
$w_{=n}^{\ast u}(\xi)$  be the supremum of $w$ such that
\eqref{eq:alp} has infinitely many solutions in algebraic 
integers resp. units $\alpha$ of 
degree exactly $n$. Let $w_{=n}^{int}(\xi)$ resp. $w_{=n}^{u}(\xi)$
be the supremum 
of $w$ so that \eqref{eq:hass} has infinitely many solutions in 
irreducible monic polynomials of degree exactly $n$ 
resp. irreducible monic polynomials with constant coefficient $\pm 1$
of degree exactly $n$.
\end{definition}

\begin{remark}
	The exponent $w_{=n}^{\ast u}(\xi)$ is closely related to $\tau_{n}(\xi)-1$
	for $\tau_{n}$ defined and studied in~\cite{pr}, and implicitly before in~\cite{davsh}, however we prescribe exact degree. It is not hard
	to see that
	the identity $\max_{1\leq j\leq n} w_{=j}^{\ast u}(\xi)=\tau_{n}(\xi) -1$
	holds.
\end{remark}

We start with the quantities $w_{=n}^{int}(\xi)$ and $w_{=n}^{\ast int}(\xi)$.
The generic value attained for Lebesgue almost 
all real $\xi$ is $w_{=n}^{int}(\xi)=w_{=n}^{\ast int}(\xi)=n-1$.
Similar to \eqref{eq:ds}, a variant of
Davenport and Schmidt~\cite[Lemma~1]{davsh} for exact degree that
is implictly obtained in the paper by Bugeaud and Teulie~\cite[Theoreme~5]{buteu} together with Proposition~\ref{pop} 
below shows for any real $\xi$
the chain of inequalities
\begin{equation}  \label{eq:preis}
w_{=n}^{int}(\xi) \geq
w_{=n}^{\ast int}(\xi) \ge \frac{1}{ \widehat{\lambda}_{n-1}(\xi) }, \qquad n\geq 2.
\end{equation}
A famous result of Roy~\cite{royann} verified $w_{=3}^{\ast int}(\xi)=(1+\sqrt{5})/2<2$ for a certain class of 
transcendental real numbers $\xi$. This is the minimum
value of $w_{=3}^{\ast int}(\xi)$ among transcendental real $\xi$ 
in view of \eqref{eq:preis} and \eqref{eq:2la}, and
disproves, for $n=3$, the natural conjecture that $n-1$ is a lower bound for 
the exponent for any transcendental real number $\xi$.
Our method implies some claims on the monotonicity of the exponent sequences. 

\begin{theorem} \label{t5}
 Let $1\leq m\leq n$ be integers and $\xi$ be a transcendental
 real number. Let
 \[
 \Omega_m= \Omega_{m,n}:= \min\{ w_{=m}^{int}(\xi) , \widehat{w}_{n-1}(\xi)\}.
 \]
 If
 \begin{equation} \label{eq:roest}
 \Omega_m > 2n-7,
 \end{equation}
 then
 \begin{equation} \label{eq:weem}
 w_{=n}^{int}(\xi) \geq \Omega_m.
 \end{equation}
In particular if $\Omega_{n-1} > 2n-7$ then
\begin{equation} \label{eq:tanz}
w_{=n}^{int}(\xi) \geq \min\{ w_{=n-1}^{int}(\xi) , n-1 \}.
\end{equation}
If the stronger property 
\begin{equation} \label{eq:Roest}
\Omega_m > \max\{ 2n-7, \; n-1\}
\end{equation}
holds, then additionally
\begin{equation} \label{eq:weem2}
w_{=n}^{\ast int}(\xi) \geq \frac{3}{2}\Omega_m- n + \frac{1}{2}.
\end{equation} 
\end{theorem}

The claim is most interesting again for small $n$ where the hypotheses
\eqref{eq:roest}, \eqref{eq:Roest} are mild. In fact \eqref{eq:roest} is trivially satisfied up to $n\leq 4$ when $m=n-1$, in view of \eqref{eq:preis}. On the other hand, for large $n$ it may not be feasible
to satisfy $\widehat{w}_{n-1}(\xi)>2n-7$ for any $\xi$, see the discussion on the exponents $\wo$ below Theorem~\ref{t2}.	

Now we turn towards units. 
The generic value is $w_{=n}^{u}(\xi)=w_{=n}^{\ast u}(\xi)=n-2$. 
Similar to \eqref{eq:preis}, here for any transcendental real $\xi$ we have
\begin{equation}
w_{=n}^{u}(\xi) \geq w_{=n}^{\ast u}(\xi) \geq \frac{1}{ \widehat{\lambda}_{n-2}(\xi) }, \qquad n\geq 3,
\end{equation}
as implicitly contained in~\cite{teu}. Our new result reads as follows.

\begin{theorem} \label{units}
	Let $m,n,\xi$ as in Theorem~\ref{t5} and let 
	\[
	\Omega_m^{\prime}=\Omega_{m,n}^{\prime}:= \min\{ w_{=m}^{u}(\xi) \;, \widehat{w}_{n-2}(\xi)\}.
	\]
	If
	\begin{equation*} 
	\Omega_m^{\prime} > 2n-7,
	\end{equation*}
	then
	\[
	w_{=n}^{u}(\xi)\geq \Omega_m^{\prime}.
	\]
	In particular if $\Omega_{n-1}^{\prime}>2n-7$ then
	\[
	w_{=n}^{u}(\xi) \geq \min\{ w_{=n-1}^{u}(\xi)\;, n-2 \}. 
	\]
    If the stronger property 
	\begin{equation}  \label{eq:Runitassu}
	\Omega_m^{\prime} > \max\{ 2n-7,\; n-1\}
	\end{equation}
	holds, then
		\[
	w_{=n}^{\ast u}(\xi)\geq \frac{3}{2} \Omega_m^{\prime} - n +\frac{1}{2}.
	\]
\end{theorem}

Similar remarks as for Theorem~\ref{t5} above apply.

\subsection{Irreducibility criteria for integer polynomials} \label{intro3}
Finally we derive from our method a new independent result on irreducibility of some class of polynomials. Hereby we exclude constant factors, i.e.
we refer to polynomial as reducible if it has a non-constant factor of smaller degree.
For integer polynomials
\begin{equation}  \label{eq:nopr}
P(T)=c_nT^n+\cdots+c_0,\; (c_n\neq 0), \qquad\;\;   Q(T)=d_mT^m+\cdots+d_0
\end{equation}
and $\ell, \ell_1, \ell_2\in\mathbb{Z}$ derive the polynomials
\begin{equation} \label{eq:rell}
R_{\ell}= P+\ell Q, \qquad S_{\ell}= \ell P+Q, \qquad M_{\ell_1,\ell_2}=
\ell_1 P+ \ell_2 Q.
\end{equation}
In our applications $\ell$ will mostly be a prime number, small
compared to $H(P), H(Q)$. 
For simplicity let us further define the auxiliary values
\[
\Gamma= \Gamma(P,Q,H)= \tau(c_n)\tau(d_0)\log H,
\qquad \Gamma^{\prime}= \Gamma^{\prime}(P,Q,H)= \tau(c_n)\tau(d_0)\frac{\log H}{\log \log H}
\]
with $\tau$
the number of divisors function and $H>1$ a parameter.
As usual the notation $A\ll_{u_1,\ldots,u_t} B$ means 
$A\leq cB$ for some $c>0$ that depends only on the variables $u_i$, and
if $A\ll B$ we mean the constant is absolute.

\begin{theorem}  \label{t6}
	Let $n\geq 2$ be an integer. For any $\epsilon>0$, there exists
	effectively computable
	$\delta_0=\delta_{0}(\epsilon)>0$, for which the following claims hold. 
     Let
	$P,Q$ be any integer polynomials as in \eqref{eq:nopr} 
	without common factor with
	$\deg(P)=n$ and $P(0)=c_0=0$,
	and $\deg(Q)=m<n$ and $\max\{ H(P), H(Q)\}\leq H$ for some $H>1$. 
	\begin{itemize}
		\item[(i)] 
	    Let $n\in\{ 2,3\}$. As $H\to\infty$,
		there are only $\ll \Gamma^{\prime}\ll H^{o(1)}$
		many primes
		$\ell$ for which $S_{\ell}$ defined
		above is reducible (in particular finitely many). 
		Hence, for any $\delta>0$, 
		there are $\gg_{\delta} H^{\delta}/ \log H$ many irreducible $S_{\ell}$
		for $\ell>0$ a prime up to $H^{\delta}$, and the smallest such $\ell$
		satisfies $\ell\ll H^{o(1)}$.  
		The same claims hold for $R_{\ell}$.
		\item[(ii)] Assume $n\geq 4$, $P$ has a root $\alpha$
		and $Q$ a root $\beta$ with 
		\begin{equation}  \label{eq:bau}
		|\alpha-\beta| \leq H^{-\kappa_n-\epsilon}, \qquad \kappa_n=2n-6.
		\end{equation}
		Then, as $H\to\infty$, there are only 
		$\ll_{n,\epsilon} \Gamma\ll H^{o(1)}$ many primes
		$\ell>0$ up to $H^{\delta_0}$ for which $S_{\ell}$ defined
		above is reducible. In particular, for any $0<\delta\leq \delta_0$, 
		there are $\gg_{n,\epsilon} H^{\delta}/ \log H$ many primes
		$\ell>0$ up to $H^{\delta}$ inducing irreducible $S_{\ell}$, 
		and the smallest such $\ell$
		satisfies $\ell\ll_{n,\epsilon} H^{o(1)}$.  
		The same claims hold for $R_{\ell}$.
		\item[(iii)]
	Assume $n\geq 2$ and $P$ has a root $\alpha$
	and $Q$ a root $\beta$ with
		\begin{equation}  \label{eq:baux}
	|\alpha-\beta| \leq H^{-\theta_n-\epsilon}, \qquad\qquad \theta_n=
	\begin{cases} 1, \qquad\quad\; n=2, \\
	2n-4, \quad n\geq 3.   \end{cases}
	\end{equation}
	Then, as $H\to\infty$,
	the set of coprime integer
	pairs $1\leq \ell_1,\ell_2\leq H^{\delta_0}$ that induce reducible $M_{\ell_1,\ell_2}$ has cardinality $\ll_{n,\epsilon} \Gamma\ll H^{o(1)}$. 
	In particular, for $0<\delta\leq \delta_0$,
	there are $\gg_{n,\epsilon} H^{2\delta}$ many irreducible $M_{\ell_1,\ell_2}$ for integer pairs $1\leq \ell_1< \ell_2\leq H^{\delta}$.
		\end{itemize}
\end{theorem}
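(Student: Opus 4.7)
The proof strategy is unified. Suppose the target polynomial --- $S_\ell$ (parts (i)--(ii)) or $M_{\ell_1,\ell_2}$ (part (iii)) --- admits a nontrivial factorization $A \cdot B$ in $\mathbb{Z}[T]$ with $a := \deg A \geq 1$, $b := \deg B \geq 1$, $a+b=n$. Two universal constraints apply. Gelfond's inequality yields $H(A) H(B) \ll_n H \cdot \max(1, |\ell|)$ (resp.\ $H \cdot \max(|\ell_1|, |\ell_2|)$). Because $P(0)=0$, the constant term of $S_\ell$ is exactly $d_0$ (for $M_{\ell_1,\ell_2}$: $\ell_2 d_0$), while the leading coefficient is $\ell c_n$ (resp.\ $\ell_1 c_n$); the analogous statement for $R_\ell$ swaps the roles, since $R_\ell(0) = \ell d_0$ and $\mathrm{lc}(R_\ell) = c_n$. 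Since $\ell$ (resp.\ $\ell_1$) is prime, exactly one of the two factors absorbs this prime, and the divisor data $(\mathrm{lc}(A) \bmod \ell,\, A(0))$ falls into at most $\tau(c_n)\tau(d_0)$ combinatorial types.

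For part~(i) with $n \in \{2,3\}$, one factor, say $B = qT - p$, is linear, yielding a rational root $p/q$ of $S_\ell$. If $\ell \nmid q$, the root is independent of $\ell$ and $S_\ell(p/q)=0$ solves uniquely for $\ell = -Q(p/q)/P(p/q)$, the denominator being nonzero by coprimality of $P,Q$; this accounts for $\leq \tau(c_n)\tau(d_0)$ bad primes. If $\ell \mid q$, I write $q = \ell q'$ and use the factorization $P(T) = T\widetilde P(T)$ (from $P(0)=0$) to convert $S_\ell(p/q)=0$ into the nonzero polynomial equation $p \widetilde P(s)/q' + Q(s) = 0$ of degree $\leq n-1$ in $s = p/q$; for each divisor type this yields $O(n)$ admissible $s$, hence $\ell$. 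The standard estimate $\tau(N) \ll N^{O(1/\log\log N)}$ converts the total count into the $\Gamma'$ bound, and the prime number theorem supplies the claimed counts of irreducible $S_\ell$.

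For parts (ii) and (iii) the closeness hypothesis enters through $S_\ell(\alpha) = Q(\alpha)$, resp.\ $M_{\ell_1,\ell_2}(\alpha) = \ell_2 Q(\alpha)$, at a root $\alpha$ of $P$. A mean-value estimate gives $|Q(\alpha)| = |Q(\alpha) - Q(\beta)| \ll_n H |\alpha - \beta| \leq H^{1 - \kappa_n - \epsilon}$ (resp.\ $H^{1 - \theta_n - \epsilon}$), so in any factorization $A \cdot B$ some factor, say $A$, has a root $\gamma$ extremely close to $\alpha$. Let $P_0 \in \mathbb{Z}[T]$ denote the irreducible factor of $P$ with $P_0(\alpha)=0$, of degree $n_0 \leq n$. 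Either $A$ is a rational multiple of $P_0$ --- a rigid scenario in which, for fixed $A$, the residue of $Q$ modulo $P_0$ determines $\ell$ (resp.\ $(\ell_1,\ell_2)$) up to $O_n(1)$ choices, contributing $\ll_n \tau(c_n)\tau(d_0)$ bad parameters overall --- or Liouville's inequality, via $|\mathrm{Res}(A, P_0)| \geq 1$, forces $|A(\alpha)| \gg_n H(A)^{-n_0+1} H(P_0)^{-a}$. Comparing this lower bound against the upper bound $|A(\alpha)| \ll_n H(A)\,|\alpha - \gamma|$ and inserting the smallness of $|\alpha - \gamma|$ produces a contradiction, provided $\delta_0$ is chosen small enough in terms of $\epsilon$; the precise exponents $\kappa_n = 2n-6$ and $\theta_n = 2n-4$ are dictated by saturating this balance.

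The main technical obstacle is the Liouville--vs.--closeness optimization in parts (ii) and (iii): for \emph{every} admissible splitting $a + b = n$ with $a, b \geq 1$ and every possible degree $n_0 \leq n$ of the minimal polynomial of $\alpha$, the Liouville lower bound on $|A(\alpha)|$ must strictly beat the closeness-derived upper bound unless $A$ is proportional to $P_0$. Identifying the extremal pair $(a, n_0)$ pins down the specific exponent thresholds, and the gap between $\theta_n$ and $\kappa_n$ precisely reflects the extra factor of $H^{\delta_0}$ of leading-coefficient flexibility introduced by letting $\ell_1$ vary independently of $\ell_2$ in part (iii), as opposed to the single prime parameter in parts (i)--(ii). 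Extracting the explicit dependence of $\delta_0$ on $\epsilon$ from this case analysis, together with handling the small subcases $a \in \{1, 2\}$ where Liouville is weakest, is the bulk of the work.
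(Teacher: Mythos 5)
Your part (i) is essentially a reconstruction of the proof of the quoted Theorem~\ref{t4} (constant coefficient of a linear factor divides $d_0$, leading coefficient divides $\ell c_n$, case split on whether $\ell$ divides the denominator of the rational root), which is exactly how the paper disposes of (i), so that part is fine in outline, modulo the bookkeeping that produces the $\log H/\log\log H$ factor in $\Gamma'$.

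Parts (ii) and (iii) contain a genuine gap. You analyse a \emph{single} reducible $S_\ell=AB$ and try to force a contradiction from a one-polynomial Liouville bound. First, your dichotomy's ``rigid'' branch is empty: if $A$ were a rational multiple of $P_0$, then $P_0$ would divide both $S_\ell=\ell P+Q$ and $P$, hence $Q$, contradicting coprimality of $P,Q$; so your entire count of bad parameters would have to come from the ``contradiction'' branch, i.e.\ you would be proving that \emph{no} $S_\ell$ is reducible, which is stronger than the theorem and not what this data can yield. Second, and decisively, the exponents do not balance. The hypothesis only gives $|A(\alpha)|\,|B(\alpha)|=|S_\ell(\alpha)|=|Q(\alpha)|\ll H^{1-\kappa_n-\epsilon}=H^{7-2n-\epsilon}$; locating a root $\gamma$ of one factor near $\alpha$ costs an $n$-th root in general, and even granting the unattainable $|\alpha-\gamma|\leq H^{7-2n-\epsilon}$, the separation bound for roots of the coprime polynomials $A$ and $P_0$ gives only $|\alpha-\gamma|\gg H^{-n(1+\delta_0)-(n-1)}$, and $7-2n-\epsilon>1-2n$ for all $n$, so there is never a conflict; likewise the product of the two per-factor lower bounds, $\gg H^{-(4n-4)}$, never contradicts $H^{7-2n}$. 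The thresholds $\kappa_n=2n-6$ and $\theta_n=2n-4$ cannot be recovered from this balance. The idea you are missing is the comparison of factors across \emph{different} indices: the paper sets $\xi=(\alpha+\beta)/2$, deduces $|P(\xi)|,|Q(\xi)|\ll H^{-(2n-7)-\epsilon}$ via Proposition~\ref{pop}, and then reruns the machinery of Sections 3--7, where irreducible factors $A_{\ell_1},A_{\ell_2}$ of distinct $S_{\ell_1},S_{\ell_2}$ are pairwise coprime (Proposition~\ref{aktiv}) and all tiny at the same point $\xi$, so the two-polynomial Liouville inequality (Lemma~\ref{sb}) and the counting Lemma~\ref{lemur} bound how many can occur in each degree, with the critical degree-$(n-2)$ case handled by pigeonholing on divisors of $c_n$ and $d_0$ and by passing to the lower-degree difference polynomials $G$ and $F$. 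Without that cross-index argument the claimed bounds in (ii) and (iii) are not reached.
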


\begin{remark} \label{r1}
	As pointed out to me by D. Roy, instead of $P(0)=0$ and $\deg(Q)<n$ we may alternatively assume that $P,Q$ have degree $n$ and each has a rational root (not the same). This follows by considering $P^{\ast}(T)= (c+dT)^n P(\sigma(T))$ and $Q^{\ast}(T)= (c+dT)^n Q(\sigma(T))$, where $\sigma$ 
	is a birational transformation
	$\sigma: T\mapsto (aT+b)/(cT+d)$ with any integers $a,b,c,d$ satisfying
	$ad-bc\neq 0$.  
\end{remark}

The roots $\alpha, \beta$ in (ii), (iii) may be complex.
Claim (i) follows 
directly from Theorem~\ref{t4} below obtained in~\cite{moscj}.
The special case $Q\equiv 1$ of claim (i) 
for $R_{\ell}$ has a consequence on the problem of Szegedy 
if there is an absolute bound $C=C(n)$ such that
$P+b$ is irreducible for some integer $|b|\leq C$ when $P$
is any degree $n$ integer polynomial (see also Turan's problem, for example~\cite{fil, gy, schi}). 
For $n=2$, it is remarked in~\cite{gy} that $|b|\leq 2$ suffices,
however for $n>2$ Szegedy's problem is open.
Corollary~\ref{kro} below settles a moderately growing
bound in terms of the height for cubic $P$. It has
not been noticed in~\cite{moscj} as the author was at that time
unaware of Szegedy's problem and the paper~\cite{gy}.

\begin{corollary} \label{kro}
	Assume $P(T)\in \mathbb{Z}[T]$ as in \eqref{eq:nopr} is cubic of
	height $H(P)\leq H$.
	Then there is an integer $b$ that satisfies
	$|b|\ll \tau(c_3)(\log H)^2\ll H^{o(1)}$ as $H\to\infty$ and such that $P(T)+b$ is irreducible. Moreover, the constant coefficient
	of $P(T)+b$ can be chosen a prime.
\end{corollary}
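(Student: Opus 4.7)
The plan is to apply the $Q \equiv 1$ special case of Theorem~\ref{t6}(i) to the cubic $\tilde{P}(T) := P(T) - c_0 = c_3 T^3 + c_2 T^2 + c_1 T$, which has $\tilde{P}(0) = 0$, $\deg \tilde{P} = 3$, and $H(\tilde{P}) \le H$. With $Q(T) \equiv 1$ (trivially coprime to $\tilde{P}$, of degree $0 < 3$ and height $1$), all hypotheses of Theorem~\ref{t6}(i) with $n = 3$ are met. Since $d_0 = 1$, the quantity $\Gamma'(\tilde{P}, Q, H)$ reduces to $\tau(c_3) \log H / \log\log H$, so the theorem provides an absolute constant $C_0$ such that the set $\mathcal{B}$ of primes $\ell > 0$ for which $R_\ell = \tilde{P} + \ell$ is reducible satisfies $|\mathcal{B}| \le C_0 \tau(c_3) \log H / \log\log H$.

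The key identity $R_\ell = P + (\ell - c_0)$ shows that the desired $b$ will be extracted as $b := \ell - c_0$ for a suitable prime $\ell \notin \mathcal{B}$ close to $c_0$. The plan is then to invoke a Chebyshev-type estimate in the window $I := [c_0 - X, c_0 + X]$ with $X = C_1 \tau(c_3)(\log H)^2$ for a sufficiently large absolute constant $C_1$: if one can ensure that the number of primes in $I$ strictly exceeds $|\mathcal{B}|$, then at least one such prime $\ell^{\ast}$ lies outside $\mathcal{B}$, whereupon $b := \ell^{\ast} - c_0$ yields $|b| \le X \ll \tau(c_3)(\log H)^2$ together with $P + b = \tilde{P} + \ell^{\ast}$ irreducible, as claimed.

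The main obstacle is precisely this short-interval prime count at scale $\tau(c_3)(\log H)^2$ around a point $c_0$ of size up to $H$: classical unconditional results (Hoheisel, Baker--Harman--Pintz, etc.) cover intervals of length only down to about $x^{0.525 + o(1)}$, far exceeding our target window when $c_0 \asymp H$. To circumvent it, I would upgrade Theorem~\ref{t6}(i) from primes to \emph{all} integers $\ell$ by observing that every reducible $R_\ell$ forces $\tilde{P}$ to take the integer value $-\ell$ at a rational $r/s$ with $s \mid c_3$ and $\gcd(r, s) = 1$, so the reducible $\ell$ in $I$ are parametrized by a bounded divisor-theoretic family whose total size should be of the same order $\tau(c_3)\log H/\log\log H$ as $|\mathcal{B}|$ itself; a pigeonhole on $I \cap \mathbb{Z}$ then supplies a good $\ell$ unconditionally, without any appeal to primes in short intervals. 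Executing that upgrade carefully is the step I expect to require the most care; the remainder of the argument — combining the Theorem~\ref{t6}(i) bound with an elementary counting step — is routine.
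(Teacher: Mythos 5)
Your first two paragraphs reproduce the paper's own proof exactly: pass to $\tilde{P}=P-P(0)$, apply claim (i) of Theorem~\ref{t6} with $Q\equiv 1$ to get a set $\mathcal{B}$ of at most $\ll\tau(c_3)\log H/\log\log H$ bad primes, and then look for a prime $\ell\notin\mathcal{B}$ in a window of length $\asymp\tau(c_3)(\log H)^2$ centred at $c_0=P(0)$. At this point the paper simply asserts, ``by Prime Number Theorem'', that such a window contains more primes than $\sharp\mathcal{B}$. Your scepticism about that step is justified: when $|c_0|\asymp H$ this is a primes-in-short-intervals assertion at scale $(\log H)^{2+o(1)}$ around a point of size $H$, which PNT does not deliver and which no unconditional result (Hoheisel, Baker--Harman--Pintz, etc.) comes close to; it is essentially Cram\'er's conjecture. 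So the obstacle you identify is genuinely present in the paper's argument and is not an artefact of your reading.

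The problem is that your proposed repair is precisely the step that would carry the content of the corollary once the prime route is abandoned, and it is only asserted, not proved. A reducible cubic $\tilde{P}+\ell$ must have a linear factor, hence a rational root $r/s$ in lowest terms with $s\mid c_3$ and $\ell=-\tilde{P}(r/s)$; so to pigeonhole over all integers $\ell$ in the window $I$ you must show, for each of the $\ll\tau(c_3)$ admissible denominators $s$, that only few integers $r$ coprime to $s$ have $\tilde{P}(r/s)$ equal to an integer lying in $-I$. This requires controlling the preimage of a short interval under the cubic $t\mapsto\tilde{P}(t/s)$ (near a flat spot such as $\tilde{P}(T)=c_3(T+w)^3-c_3w^3$ one already gets $\asymp(X/|c_3|)^{1/3}$ integers $r$ mapping into an interval of length $X$, and simple critical points and the integrality constraint on $\tilde{P}(r/s)$ must be handled as well). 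The precise order you claim, $\tau(c_3)\log H/\log\log H$, is neither obvious nor what is actually needed; what the pigeonhole requires is that the total count be $o\bigl(\sharp(I\cap\mathbb{Z})\bigr)=o\bigl(\tau(c_3)(\log H)^2\bigr)$. That bound is plausible and your route is the natural way to make the corollary unconditional, but as written the decisive counting lemma is missing, so the proposal has a genuine gap at its key step.
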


\begin{proof}
	Let $\tilde{P}=P-P(0)$, which satisfies $\tilde{P}(0)=0$ and $H(\tilde{P})\leq H(P)$. 
	By claim (i) of Theorem~\ref{t6} with $Q\equiv 1$, there are only
	$\ll \tau(c_3)\log H/ \log\log H<\tau(c_3)\log H$ many primes $\ell>0$ 
	with $\tilde{P}+\ell$ reducible. On the other hand, by Prime Number Theorem and since $\tau(c_3)\ll H^{o(1)}$ as $H\to\infty$ (see Lemma~\ref{rop} below),
	we easily check that for any $0<X\leq H$ and for $c>0$ large enough independent from $H$, the interval $[X,X+c\tau(c_3)(\log H)^2]$ 
	contains a larger number of primes.
	Application to $X=P(0)$
	shows that $\tilde{P}(T)+X+b=P(T)+b$ is irreducible for
	some integer $0<b\ll \tau(c_3)(\log H)^2\ll H^{o(1)}$,
	as $H\to\infty$. Hereby the last estimate is immediate again from Lemma~\ref{rop} below. Finally $(P+b)(0)=P(0)+b$ is prime by construction. 
	\end{proof}

For general $n$, there still exists $b$ as in the corollary
such that $P+b$ has no linear factor. We compare our bound
with a result of Gy\H{o}ry~\cite{gy}.
Using the Thue-Siegel-Roth-Schmidt
method, he~\footnote{Actually a weaker bound is proved in~\cite{gy}.
For the improved bound in \eqref{eq:jab}, Gy\H{o}ry refers to private 
correspondence with J.H. Evertse. It appears this has
never been published. } showed that in Szegedy's problem 
we can take the bound
\begin{equation}  \label{eq:jab}
|b|\leq \exp \{ (\omega+1) \log (\omega+2) (2^{17}n)^{n^3} \},
\end{equation}
where $\omega=\omega(c_n)$ denotes the number of prime divisors of
the leading coefficient $c_n$ of $P(T)$.
While \eqref{eq:jab} is strong in some cases,
in particular for monic polynomials see also~\cite{hajdu}, 
it becomes rather weak if
$c_n=H(P)$ and $c_n$ has 
many prime divisors. 
Indeed, if $c_n$ is primorial, i.e. the product of the first $N$ primes
for some $N$, then $\omega(c_n)\geq  (1- o(1))\log c_n/ \log\log(c_n)$ 
as $N\to\infty$ and the bound 
in \eqref{eq:jab} becomes a quite large power of $H$. 
Thus, if $n=3$, our bound from Corollary~\ref{kro} is considerably stronger 
than \eqref{eq:jab} in general, in terms of the height. 
For $n>3$, our condition
\eqref{eq:bau} enters and does not apply if $Q\equiv 1$, therefore
we get no contribution to Szegedy's problem.

We add a few more remarks on Theorem~\ref{t6}.
We notice that when replacing $\kappa_n$ (or $\theta_n$) by $2n-2$,
the hypothesis \eqref{eq:bau} (or \eqref{eq:baux}) can only hold for small $H\leq H_0(\epsilon)$, by Liouville's inequality~\cite[Corollary~A.2]{bugbuch} and
since $P(0)=0$ (we may exclude $\alpha=0$ for $n\geq 3$ and large $H$ since
then $|\beta-\alpha|=|\beta|\gg_{n} H^{-1}$ contradicts \eqref{eq:bau}).
Similar to~\cite{gy},
the bound for the number of reducible
polynomials in all claims depends on the factorization
of certain coefficients of $P$ or $Q$, generically
a power of $\log H$ suffices.
We should point out that Cavachi~\cite{cavachi} showed 
that $S_{\ell}$ are irreducible for every sufficiently
large prime $\ell$ (for effective versions for 
prime powers see~\cite{bb}), for every $n$ and 
without condition $P(0)=0$, 
that is for
every $P,Q$ without common factor and $\deg(Q)<\deg(P)$. For $n\in\{2,3\}$
and if $P(0)=0$, an effective bound applicable to both $R_{\ell}$ and $S_{\ell}$ is given in~\cite[Theorem~3.3]{moscj}. 
However, Theorem~\ref{t6} is concerned with small $\ell$
and the involved bounds in $H$ from~\cite{bb},~\cite{moscj} 
are by far too large for interesting applications in the direction of Theorems~\ref{H},~\ref{t1},~\ref{t2}.

The following examples, partly inspired by~\cite[Remark~2]{cavachi}, 
suggest that we do not have much freedom
regarding relaxing the conditions in Theorem~\ref{t6}.

\begin{example}  \label{exa}
	Let 
	\[
	P(T)=T^2, \qquad  Q(T)= -T^2-1,
	\]
	which satisfy the assumptions of claim (i) in Theorem~\ref{t6} 
	for $n=2$ apart from $\deg(Q)<n$, for every $H\geq 1$.
	If $\ell$ is a prime of the form 
	$\ell=N^2+1$, then
	\[
	S_{\ell}= \ell P+Q= N^2T^2-1= (NT+1)(NT-1)
	\] 
	decomposes into linear factors. If we assume that $\ell$ above is prime
	with probability $(\log \ell)^{-1}\asymp (\log N)^{-1}$, then we
	should expect $\gg H^{\delta/2}/ \log H$ reducible
	$S_{\ell}$ up to $\ell\leq H^{\delta}$ for any $\delta>0$ and $H\geq 1$.
	In particular finiteness is highly unlikely.
	Similarly, if we admit $P(0)\neq 0$, 
	then we should again expect $\gg H^{\delta/2}/ \log H$ reducible
	$R_{\ell}= P+\ell Q$ from primes $\ell>0$ up to $\ell \leq H^{\delta}$ for $P,Q$ given by
	\[
	P(T)= T^2+1, \qquad Q(T)=-1.
	\]	
	For any $n\geq 2$, take
		\[
	P(T)= T^n, \qquad Q(T)=-1.
	\]	
	They satisfy all hypotheses of claim (iii) of Theorem~\ref{t6} 
	 apart from \eqref{eq:baux}, and the claim fails as can be seen 
	by considering $\ell_1=a^n, \ell_2=b^n$ for coprime integer pairs $(a,b)$.
\end{example}

We believe that similar examples
for $S_{\ell}$ when $P(0)\neq 0$
and for $R_{\ell}$ when $\deg(Q)<n$ can be found, 
but leave this as an open problem.
On the other hand, presumably
we only require polynomials $P,Q$
without common factor for the lower bounds on 
irreducible polynomials in all claims of Theorem~\ref{t6}.
We formulate some problems.

\begin{problem}
	In context of claims (ii), (iii) of Theorem~\ref{t6},
	does $\gg H^{\delta}/ \log H$ resp. $\gg H^{2\delta}$
	for the number of irreducible $S_{\ell}$ or $R_{\ell}$ 
	resp. $M_{\ell_1, \ell_2}$ remain true without condition 
	\eqref{eq:bau} resp. \eqref{eq:baux}?
	Can we further drop the condition $\deg(Q)<n$ 
	and/or $P(0)=0$ in claims (i), (ii), (iii)?
	What if we do not restrict $\ell$ to be prime in claims (i), (ii)?
\end{problem}

\section{Auxiliary results}  \label{prep}

The following observation is implicitly
implied in Wirsing's work~\cite{wirsing}
when incorporating the refinements explained in the paragraph below~\cite[Theorem~2.7]{buschlei}.

\begin{theorem}[Wirsing; Bugeaud, Schleischitz]  \label{wbs}
	Let $n\geq 2$ be given and $P_1,P_2$ integer polynomials 
	of degree at most $n$ and without common factor. Assume
	\[
	|P_i(\xi)| \leq (\max_{i=1,2}   H(P_i) )^{-\eta}, \qquad \quad i=1,2,
	\]
	holds for some $\eta>n-1$. Then for some $i\in \{1,2\}$ the polynomial $P_i$
	has a root $\alpha$ that satisfies
	\[
	|\alpha - \xi| \ll H(P_i)^{-(\frac{3}{2}\eta-n+\frac{1}{2})-1}.
	\]
\end{theorem}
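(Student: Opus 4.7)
The plan is to follow Wirsing's classical strategy, refined along the lines of~\cite{buschlei}, leveraging the resultant of $P_1,P_2$ to exploit coprimality. Let $M:=\max\{H(P_1),H(P_2)\}$, $n_i:=\deg P_i\le n$, and for each $i$ fix a root $\alpha_i$ of $P_i$ at minimal distance $\rho_i:=|\xi-\alpha_i|$ from $\xi$.

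The first step is the standard closest-root inequality
\[
\rho_i\ \ll_n\ \frac{|P_i(\xi)|}{|P_i'(\alpha_i)|}\ \ll_n\ \frac{M^{-\eta}}{|P_i'(\alpha_i)|},
\]
obtained from $P_i(\xi)=a_i\prod_j(\xi-\alpha_{i,j})$ by comparing $|\xi-\alpha_{i,j}|$ with $|\alpha_i-\alpha_{i,j}|$ when $\rho_i$ is small relative to the pairwise root separations. Thus the entire argument reduces to a suitable lower bound on $|P_i'(\alpha_i)|$, which is where coprimality must enter.

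Next, coprimality enters through the resultant: $|\mathrm{Res}(P_1,P_2)|\ge 1$ as a nonzero rational integer. Expanding $\mathrm{Res}(P_1,P_2)=a_1^{n_2}\prod_j P_2(\alpha_{1,j})$ and bounding each $|P_2(\alpha_{1,j})|\ll_n M\cdot\max(1,|\alpha_{1,j}|)^{n_2}$, combined with the Mahler product estimate $\prod_j\max(1,|\alpha_{1,j}|)\ll_n M/|a_1|$, yields
\[
|P_2(\alpha_1)|\ \gg_n\ M^{-(n_1+n_2-1)}\max(1,|\alpha_1|)^{n_2},
\]
and symmetrically for $|P_1(\alpha_2)|$. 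A Taylor expansion at $\xi$ simultaneously provides the upper bounds $|P_2(\alpha_1)|\ll_n M^{-\eta}+\rho_1M$ and $|P_1(\alpha_2)|\ll_n M^{-\eta}+\rho_2M$.

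The heart of the proof is to combine these two-sided inequalities symmetrically: multiplying the upper bounds and comparing with the product of the two lower bounds gives
\[
(M^{-\eta}+\rho_1M)(M^{-\eta}+\rho_2M)\ \gg_n\ M^{-2(n_1+n_2-1)}.
\]
In the regime $\eta>2n-1$ (outside which the theorem is trivial) the $M^{-\eta}$ terms are negligible compared with the resultant bound, so the dominant contribution is $\rho_1\rho_2 M^2\gg_n M^{-2(n-1)}$, i.e.\ $\rho_1\rho_2\gg_n M^{-2n}$. Feeding this into $\rho_i|P_i'(\alpha_i)|\ll_n M^{-\eta}$ from the first step, and exploiting the trivial $|P_i'(\alpha_i)|\le C_n M^{n_i-1}$ for the complementary index to trade a factor of $\rho_{3-i}$ against $|P_i'(\alpha_i)|$, yields $\min(\rho_1,\rho_2)\ll_n M^{-(3\eta/2-n+3/2)}$. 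Finally, $M$ may be replaced by $H(P_i)$ for the chosen index $i$ without loss, since $H(P_i)\le M$ and the inequality strengthens when the smaller-height polynomial furnishes the close root.

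The main obstacle lies in this final optimisation step: a naive single-polynomial Wirsing argument only yields the weaker exponent $\eta-n+1$, whereas the sharp $3\eta/2$ requires the genuinely symmetric product trick enabled by the coprimality hypothesis, together with careful bookkeeping of the factors $\max(1,|\alpha_i|)^{n_j}$ from the Mahler estimate so that they recombine into the naive height and not into some larger auxiliary quantity. Controlling these factors uniformly is the one piece of routine work that needs to be executed with some care.
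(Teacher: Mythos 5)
There is a genuine gap, located precisely at the two points you flag as needing care. First, your case analysis is inverted. By Lemma~\ref{sb}, two coprime integer polynomials of degrees at most $n$ and heights at most $M$ satisfy $\max_i|P_i(\xi)|\gg_{n,\xi}M^{-(2n-1)}$, so the hypothesis of the theorem with $\eta>2n-1$ forces $M$ to be bounded and that regime is essentially vacuous. The substantive range --- and the only one arising in the paper's applications, where $\eta$ is close to $\wo\in[n,2n-2]$ --- is $\eta\le 2n-1$, which you dismiss as ``trivial''. It is not: for $\eta=n$ the claimed decay exponent is $(n+3)/2$, whereas the trivial estimate $|\xi-\alpha|\le|P(\xi)|^{1/\deg P}\le M^{-1}$ and the naive single-polynomial (discriminant) argument both fall well short. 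So the theorem is left unproved exactly where it has content.

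Second, even inside your chosen regime the concluding step is a non sequitur, because every inequality you assemble points the wrong way. The resultant lower bounds combined with the Taylor upper bounds give $(M^{-\eta}+\rho_1M)(M^{-\eta}+\rho_2M)\gg M^{-2(n_1+n_2-1)}$, i.e.\ a \emph{lower} bound on $\rho_1\rho_2$, and hence, via $\rho_i\ll M^{-\eta}/|P_i'(\alpha_i)|$, only an \emph{upper} bound on $|P_1'(\alpha_1)|\,|P_2'(\alpha_2)|$. To convert the closest-root inequality into an upper bound on some $\rho_i$ you need a \emph{lower} bound on the corresponding $|P_i'(\alpha_i)|$, and none of your ingredients supplies one; the trivial bound $|P_i'(\alpha_i)|\ll M^{n_i-1}$ also goes the wrong direction. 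Concretely, the configuration $\rho_1=\rho_2=M^{-1}$ and $|P_i'(\alpha_i)|=M^{1-\eta}$ satisfies every displayed inequality in your argument while violating the conclusion by a large margin, so the step ``yields $\min(\rho_1,\rho_2)\ll M^{-(3\eta/2-n+3/2)}$'' cannot follow from what precedes it. The mechanism actually responsible for the $\tfrac{3}{2}\eta$ gain (Wirsing, refined in~\cite{buschlei}) bounds the resultant from \emph{above} by exploiting that roots of $P_1$ \emph{and} of $P_2$ lie near $\xi$ simultaneously (e.g.\ $|P_2(\alpha_1)|\ll|P_2(\xi)|$ when $\alpha_1$ is the overall closest root), and that idea is absent here. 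There is also an arithmetic slip: the right-hand side of your product inequality should be $M^{-2(n_1+n_2-1)}$, which can be as small as $M^{-2(2n-1)}$, not $M^{-2(n-1)}$.
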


\begin{proof}
	We follow the proof of~\cite[Theorem~3.14]{bugbuch} starting below the proof
	of~\cite[Lemma~3.2]{bugbuch}, incorporating the improvement of~\cite[(3.16)]{bugbuch} obtained in~\cite{buschlei}, upon identifying
	$P_k$ with our $P_1$ and $Q_k$ with our $P_2$. In order to avoid~\cite[(3.17)]{bugbuch} from the case distinction
	in~\cite[Lemma~3.2]{bugbuch}, it suffices to have $\eta>n-1$. The remaining
	cases lead for $\alpha$ a root of $P_1$ resp. $P_2$ to the respective bounds 
	\[
	|\alpha - \xi| \ll H(P_1)^{-(\frac{3}{2}\eta-n+\frac{1}{2})-1}, \qquad
	|\alpha - \xi| \ll H(P_2)^{-(\frac{3}{2}\eta-n+\frac{1}{2})-1}
	\]
	applicable in cases~\cite[(3.18)]{bugbuch} respectively ~\cite[(3.19)]{bugbuch}, and 
	\begin{equation} \label{eq:NNe}
	|\alpha - \xi| \ll H(P_1)^{-\eta-1}, \qquad |\alpha - \xi| \ll H(P_2)^{-\eta-1}
	\end{equation}
	in the cases~\cite[(3.20)]{bugbuch} respectively~\cite[(3.21)]{bugbuch}.
	Since the hypothesis of the theorem can only hold when $\eta\le 2n-1$ by the method in~\cite{buschlei} or~\cite{davsh},
	see in particular~\cite[Lemma~3.1]{buschlei}, the weaker bound is the one in \eqref{eq:NNe} and yields the claim.
	\end{proof}

As pointed out in the proof, 
the hypothesis of the theorem implies $\eta\le 2n-1$.
Theorem~\ref{wbs} in particular shows that if its conditions hold for given $\eta>n-1$ and a sequence of pairs $P_1, P_2$ of arbitrarily large heights and all of exact degree $n$, then we have
\[
w_{=n}^{\ast}(\xi) \geq \frac{3}{2}\eta-n+\frac{1}{2}.
\]
Indeed we will show that this can be arranged for $n\leq 7$ and 
any transcendental real $\xi$ for  any value
$n<\eta<\wo$ to derive Theorem~\ref{t1}.

Recall the notation $R_{\ell}, S_{\ell}$ from \eqref{eq:rell}.
The next partial claim of~\cite[Theorem 3.3]{moscj} 
gives a criterion on $P,Q$ that guarantees that 
only for few prime values $\ell$ the polynomials $R_{\ell}, S_{\ell}$
can have a linear factor.

	\begin{theorem}[Schleischitz] \label{t4}
		Let $n\geq 2$ be an integer. 
		Let $P(T)$ with $\deg(P)=n$ and $P(0)=0$, and
		$Q(T)$ with $\deg(Q)<n$ be integer polynomials
		without common factor.
		Let $H=\max\{ H(P),H(Q)\}$.
		Then for any $\varepsilon>0$, there exists a constant
		$c=c(n,\varepsilon)>0$ not depending on $P,Q$
		such that the number of prime numbers $\ell$ for which 
		either of the polynomials $R_{\ell}$ or $S_{\ell}$ has a linear factor
		over $\mathbb{Z}[T]$, is less
		than $cH^{\varepsilon}$. More precisely, the upper bound
		$\ll_{n} \tau(c_n)\tau(d_0)\log H/ \log\log H$ holds with the notation of
		\S~\ref{intro3}. 
\end{theorem}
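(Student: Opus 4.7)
The plan is to apply the rational root theorem to any hypothetical linear factor and then to exploit the primality of $\ell$ to split the analysis into a bounded-root case and an $\ell$-absorbing case, each of which admits explicit bookkeeping in terms of divisors of $c_n$ and $d_0$.

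Suppose $R_\ell$ (respectively $S_\ell$) has a linear factor and hence a rational root $p/q$ with $\gcd(p,q)=1$. Since $\deg(Q)<n$ and $P(0)=0$, the polynomial $R_\ell$ has degree $n$ with leading coefficient $c_n$ and constant term $\ell d_0$, while $S_\ell$ has degree $n$ with leading coefficient $\ell c_n$ and constant term $d_0$. The rational root theorem thus gives $q\mid c_n$ and $p\mid \ell d_0$ for $R_\ell$, and $p\mid d_0$ and $q\mid \ell c_n$ for $S_\ell$; by primality of $\ell$, each splits into a subcase where $\ell$ appears in the factorization of $p$ or $q$ and one where it does not. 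In the bounded-root subcase ($p\mid d_0$ and $q\mid c_n$) one obtains at most $\tau(c_n)\tau(d_0)$ candidate rationals $r=p/q$, and for each such $r$ the equation $R_\ell(r)=0$ (respectively $S_\ell(r)=0$) determines $\ell$ uniquely as $-P(r)/Q(r)$ (respectively $-Q(r)/P(r)$); both ratios are well defined because $\gcd(P,Q)=1$ prevents $r$ from being a common root. This subcase therefore contributes at most $\tau(c_n)\tau(d_0)$ primes.

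In the $\ell$-absorbing subcase one writes $p=\ell p'$ (for $R_\ell$) or $q=\ell q'$ (for $S_\ell$) with $p'\mid d_0$ or $q'\mid c_n$, substitutes the resulting root into the defining equation, and clears denominators. For $R_\ell$, multiplying $R_\ell(\ell p'/q)=0$ through by $q^n$ yields the integer polynomial identity
\[
\sum_{i=1}^n c_i {p'}^i \ell^i q^{n-i}+\sum_{j=0}^m d_j {p'}^j \ell^{j+1} q^{n-j}=0
\]
in the single variable $\ell$, of degree at most $n$; the analogous manipulation for $S_\ell$ produces a similar identity of degree at most $n$. The crucial structural observation is that this polynomial in $\ell$ cannot vanish identically for any fixed pair of parameters: after collecting terms, the coefficient of $\ell^k$ factors as ${p'}^{k-1}q^{n-k}(c_k p'+d_{k-1}q)$ (with the conventions $c_0=0$ and $d_j=0$ for $j>m$), and simultaneous vanishing for all $k$ forces first $m=n-1$ and then $d_j=-(p'/q)c_{j+1}$ for every $j$, i.e.\ $Q(T)=-(p'/q)P(T)/T$. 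Since $n\geq 2$, the polynomial $P(T)/T$ is a non-constant integer polynomial, and the identity exhibits it as a common factor of $P$ and $Q$, contradicting coprimality. Hence the polynomial in $\ell$ has at most $n$ roots per choice of parameters, so this subcase contributes at most $\ll_n \tau(c_n)\tau(d_0)$ further primes.

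Summing the two subcases across both $R_\ell$ and $S_\ell$, and absorbing the finitely many primes dividing $c_n d_0$ that may need to be excluded during the reductions (bounded by $\omega(c_n d_0)\ll\log H/\log\log H$), yields the asserted estimate $\ll_n \tau(c_n)\tau(d_0)\log H/\log\log H$. The main technical obstacle is the identically-zero check in the $\ell$-absorbing subcase: while the manipulations are elementary, one must carefully track the index conventions and verify that the simultaneous vanishing conditions genuinely force a non-constant common factor of $P$ and $Q$ via the hypothesis $P(0)=0$. Once that verification is in place, the remainder reduces to a routine application of the rational root theorem together with the standard bound $\omega(N)\ll\log N/\log\log N$ for integers $N$ of size $H^{O(1)}$.
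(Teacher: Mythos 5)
Your argument is correct, and it reaches the stated bound by a route that differs from the cited proof at the decisive counting step. The remark following the theorem indicates that the proof in~\cite{moscj} bounds, for each admissible root datum, the number of bad primes by $\omega(N_s)$ for a suitable nonzero integer $N_s$ of height $H^{O(1)}$ that $\ell$ must divide; this is exactly where the factor $\log H/\log\log H$ enters. You instead observe that in the bounded-root subcase the prime is pinned down exactly as $\ell=-P(r)/Q(r)$ (well defined, since a common rational root of $P$ and $Q$ would contradict coprimality), and that in the $\ell$-absorbing subcase $\ell$ is a root of an explicit polynomial of degree at most $n$ which you correctly show cannot vanish identically: simultaneous vanishing of the coefficients $p'^{\,k-1}q^{\,n-k}(c_kp'+d_{k-1}q)$ forces $c_k=0$ for $k>m+1$, hence $m=n-1$, hence $qQ=-p'\,P(T)/T$, exhibiting the non-constant common factor $P(T)/T$ --- this is where $P(0)=0$ and $c_n\neq 0$ are both essential. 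Your hypotheses are used exactly where they must be: $\deg Q<n$ gives the leading coefficients $c_n$ and $\ell c_n$ of $R_\ell$ and $S_\ell$, and $P(0)=0$ gives the constant terms $\ell d_0$ and $d_0$, so the rational root theorem yields the clean dichotomy on whether the prime $\ell$ divides the numerator (resp.\ denominator) of the root. The net effect is the sharper count $\ll_n\tau(c_n)\tau(d_0)$, which of course implies the stated $\ll_n\tau(c_n)\tau(d_0)\log H/\log\log H$; the extra $\omega(c_nd_0)$ term you add at the end to cover ``excluded primes'' is not actually needed, since your case analysis is exhaustive and discards no primes.
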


For the last claim see the comments below~\cite[Remark~1]{moscj} and
also observe that within the proof of~\cite[Theorem 3.3]{moscj} we 
can win another double logarithm when estimating $\omega(N_s)\ll \log |N_s|/\log\log |N_s|$ in place of $\omega(N_s)\ll \log |N_s|$
which holds true as well, see Lemma~\ref{rop} below.  
A problematic issue is that Theorem~\ref{t4} does not apply to $Q$ 
of degree precisely $n$, which complicates the proofs of our main
results. The assumption $P(0)=0$ in Theorem~\ref{t4}
is slightly disturbing as well.
However, Example~\ref{exa} above 
demonstrates that we have to be very careful with generalizations.

The following well-known estimates for the height of products
are often referred to as Gelfond's Lemma.
It can be found for example in~\cite{wirsing}.

\begin{lemma}[Gelfond]  \label{gel}
	Let $d\geq 1$ be an integer. For polynomials $P_1, P_2$ 
	of degree at most $d$ the heights
	are multiplicative up to a factor, that is there is $c(d)>0$ so that 
	\[
	c(d)^{-1} H(P)H(Q) \leq H(PQ) \leq c(d) H(P)H(Q).
	\]
\end{lemma}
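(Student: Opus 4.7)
The plan is to argue the two directions separately: the upper bound by a direct coefficient estimate, and the lower bound by passing through the Mahler measure, which is exactly multiplicative and two-sidedly comparable to the naive height with constants depending only on the degree.

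For the upper bound, expand $P(T)=\sum_{i}p_iT^i$ and $Q(T)=\sum_{j}q_jT^j$. The $k$-th coefficient of $PQ$ is $\sum_{i+j=k}p_iq_j$, a sum of at most $d+1$ terms each of modulus at most $H(P)H(Q)$, so $H(PQ)\leq (d+1)H(P)H(Q)$.

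For the lower bound I would introduce the Mahler measure: for $R(T)=a\prod_{\ell}(T-\alpha_\ell)\in\mathbb{C}[T]$, set $M(R)=|a|\prod_\ell\max(1,|\alpha_\ell|)$. Multiplicativity $M(PQ)=M(P)M(Q)$ is immediate from factoring over $\mathbb{C}$. Two classical one-line inequalities relate $M$ to $H$:
\[
M(R)\leq\sqrt{\deg R+1}\,H(R),\qquad H(R)\leq 2^{\deg R}M(R).
\]
The first follows from Jensen's formula $\log M(R)=\int_{0}^{1}\log|R(e^{2\pi it})|\,dt$ together with Jensen's inequality for $\exp$ and Parseval's identity $\int_{0}^{1}|R(e^{2\pi it})|^{2}\,dt=\sum_i|a_i|^{2}\leq(\deg R+1)H(R)^{2}$. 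The second follows by writing each coefficient of $R$ as $\pm a$ times an elementary symmetric function in the roots and bounding $|e_j(\alpha_1,\ldots,\alpha_{\deg R})|\leq\binom{\deg R}{j}\prod_{\ell}\max(1,|\alpha_\ell|)\leq 2^{\deg R}\prod_{\ell}\max(1,|\alpha_\ell|)$. Combining these with multiplicativity gives
\[
H(P)H(Q)\leq 2^{2d}M(P)M(Q)=2^{2d}M(PQ)\leq 2^{2d}\sqrt{2d+1}\,H(PQ),
\]
so one may take $c(d)=\max\{d+1,\,2^{2d}\sqrt{2d+1}\}$ for both sides of the claim.

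The argument is entirely classical, so no genuine obstacle arises; the only step requiring more than a single line is the bound $H(R)\leq 2^{\deg R}M(R)$, which reduces to a Newton-style expansion of the coefficients in terms of the roots. Since the lemma only asserts the qualitative shape of the constant, no optimization of $c(d)$ is needed, and tracking that the product $PQ$ has degree at most $2d$ rather than $d$ is the only small bookkeeping point.
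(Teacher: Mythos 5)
Your proof is correct. Note that the paper does not actually prove this lemma: it simply cites Wirsing's 1961 paper (and the statement is standard, appearing e.g. as Lemma A.3 in Bugeaud's book cited as \cite{bugbuch}), so there is no in-paper argument to compare against. Your route is the classical one: the upper bound is the trivial coefficient estimate $H(PQ)\leq(d+1)H(P)H(Q)$, and the lower bound passes through the Mahler measure via Landau's inequality $M(R)\leq\sqrt{\deg R+1}\,H(R)$ and the elementary-symmetric-function bound $H(R)\leq 2^{\deg R}M(R)$, together with exact multiplicativity of $M$. All three ingredients are stated and used correctly, the degree bookkeeping ($\deg(PQ)\leq 2d$) is handled, and the resulting constant $c(d)=\max\{d+1,\,2^{2d}\sqrt{2d+1}\}$ is of the qualitative shape the lemma asserts. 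The only cosmetic remark is that the two polynomials are called $P_1,P_2$ in the hypothesis of the lemma but $P,Q$ in the displayed inequality (a typo in the paper itself), and your write-up silently adopts the $P,Q$ convention, which is fine.
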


An immediate consequence was observed by Wirsing~\cite[Hilfssatz 1]{wirsing}.

\begin{lemma}[Wirsing]  \label{hiw}
	Assume $d\geq 1$ is an integer and $\xi$ is any real number, 
	and $P$ is a non-zero integer polynomial 
	of degree at most $d$. Suppose that for some $\eta$ the estimate 
	\[
	|P(\xi)|\leq H(P)^{-\eta}
	\]
	holds. Then there is an irreducible divisor $W$ of $P$ (possibly equal to $P$) satisfying
	\[
	|W(\xi)|\ll_{d,\xi} H(W)^{-\eta}.
	\]
\end{lemma}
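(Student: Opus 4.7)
The plan is to factor $P$ into irreducible pieces over $\mathbb{Z}[T]$ and then redistribute the upper bound $|P(\xi)| \leq H(P)^{-\eta}$ among the factors by means of Gelfond's Lemma. Concretely, write $P = a R_1 R_2 \cdots R_k$ with $a \in \mathbb{Z} \setminus \{0\}$ the content of $P$ and each $R_i$ a primitive irreducible integer polynomial; since $\deg P \leq d$ we have $k \leq d$. If $k=1$ the claim holds trivially with $R=P$, so assume $k \geq 2$.

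Evaluating at $\xi$ and using $|a| \geq 1$ yields $\prod_{i=1}^{k} |R_i(\xi)| \leq |P(\xi)| \leq H(P)^{-\eta}$. On the height side, iterating the lower estimate in Lemma~\ref{gel} provides a constant $c = c(d) \geq 1$ with
\[
H(P) \;\geq\; c^{-(k-1)} \prod_{i=1}^{k} H(R_i),
\]
hence $H(P)^{-\eta} \leq c^{\eta(k-1)} \prod_{i} H(R_i)^{-\eta}$. Combining the two inequalities gives
\[
\prod_{i=1}^{k} \Bigl( |R_i(\xi)| \cdot H(R_i)^{\eta} \Bigr) \;\leq\; c^{\eta(k-1)}.
\]
Since this is a product of at most $d$ nonnegative reals bounded above, the smallest factor is at most the $k$-th root of the right-hand side, so some index $i$ satisfies $|R_i(\xi)| H(R_i)^{\eta} \leq c^{\eta(k-1)/k} \leq c^{\eta}$. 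Setting $R = R_i$ then gives an irreducible divisor of $P$ with $|R(\xi)| \ll_d H(R)^{-\eta}$, which is stronger than required.

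I do not foresee any real obstacle; the argument is a routine pigeonhole after Gelfond. The only subtlety worth monitoring is that the multiplicative constant $c^{\eta}$ produced in the last step depends on $d$ and on $\eta$, but is uniform in $\xi$. This is consistent with the statement, since in every application within this paper $\eta$ is bounded in terms of $\xi$ (typically $\eta \leq w_n(\xi)$ for a transcendental real $\xi$), so the $\eta$-dependence of the constant is absorbed into the $\ll_{d,\xi}$ notation as written.
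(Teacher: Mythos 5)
Your argument is correct and is exactly the route the paper intends: the lemma is presented there as an immediate consequence of Gelfond's Lemma~\ref{gel} with the proof deferred to Wirsing's Hilfssatz~1, which is precisely this factor-into-irreducibles-and-pigeonhole argument. Your caveat that the resulting constant $c(d)^{\eta}$ depends on $\eta$ is a fair remark about the lemma's (admittedly imprecise) statement rather than a gap in your proof; in every application in the paper there is an $\epsilon$ of slack in the exponent that absorbs it.
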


\begin{remark}  \label{lamm}
	If $H(P)\to\infty$
	and $\xi$ is transcendental, then for the induced irreducible
	$W$ we may assume $H(W)\to\infty$ as well. Indeed, if otherwise
	$H(W)\le X_0$ is bounded for a sequence of $P$ as above, then 
	we have a uniform lower bound $|W(\xi)|\gg_{X_0,\xi,d} 1$. But then
	we may consider $P/R$ instead of $P$, which has degree less than $d$ and height $\gg_{X_0,d} H(P)$ by Gelfond's Lemma~\ref{gel}. Since $d$
	is fixed, repeating the argument, we must arrive at some
	irreducible divisor of $P$ with large height and the 
	stated property.
	\end{remark} 

The lemma is not the precise formulation by Wirsing, but
from the proof Wirsing provided, the claim of Lemma~\ref{hiw} 
is evident.
We will frequently apply a direct consequence of~\cite[Lemma~3.1]{buschlei} that
can be considered a variant of Liouville's inequality~\cite[Corollary~A.2]{bugbuch}.

\begin{lemma}[Bugeaud, Schleischitz]  \label{sb}
	Let  $U_{1},U_{2}$ be integer polynomials of
	degrees $d_{1}>0$ and $d_{2}>0$ respectively and without common
	non-constant factor over $\mathbb{Z}[T]$. 
	If we let $H=\max_{i=1,2} H(U_i)$, then for any real $\xi$ we have
	\[
	\max_{i=1,2} |U_{i}(\xi)| \geq  c H^{-d_{1}-d_{2}+1},
	\]
	for some constant $c=c(d_{1}, d_{2}, \xi)>0$ that does not depend
	on the $U_i$.
\end{lemma}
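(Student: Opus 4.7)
The natural approach is via the resultant of $U_1$ and $U_2$. Since these polynomials share no common non-constant factor in $\mathbb{Z}[T]$, Gauss's lemma lifts this coprimality to $\mathbb{Q}[T]$, so $R := \mathrm{Res}(U_1, U_2)$ is a non-zero integer and hence $|R| \geq 1$. The plan is to combine this lower bound with a Bézout-type identity whose coefficients are explicit minors of the Sylvester matrix, and then evaluate at $\xi$.

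First I would invoke the standard consequence of the Sylvester matrix construction (equivalently, the extended Euclidean algorithm in $\mathbb{Q}[T]$ cleared of denominators via Cramer's rule): there exist $A, B \in \mathbb{Z}[T]$ with $\deg A \leq d_2 - 1$ and $\deg B \leq d_1 - 1$ satisfying
\[
A(T)\, U_1(T) + B(T)\, U_2(T) = R.
\]
The coefficients of $A$ and $B$ appear, up to sign, as $(d_1+d_2-1)\times(d_1+d_2-1)$ minors of the $(d_1+d_2)\times(d_1+d_2)$ Sylvester matrix of $U_1, U_2$, so by Hadamard's inequality they are bounded in modulus by some $c_1(d_1,d_2)\, H^{d_1+d_2-1}$. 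In particular $H(A), H(B) \leq c_1(d_1,d_2)\, H^{d_1+d_2-1}$.

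Next I would evaluate the Bézout identity at $\xi$ and use $|R| \geq 1$ to obtain
\[
|A(\xi)|\cdot|U_1(\xi)| + |B(\xi)|\cdot|U_2(\xi)| \geq 1.
\]
The trivial bound $|A(\xi)| \leq d_2 \max(1,|\xi|)^{d_2-1} H(A)$ and the analogous one for $B$, combined with the height estimate above, give $|A(\xi)|, |B(\xi)| \ll_{d_1,d_2,\xi} H^{d_1+d_2-1}$. Therefore
\[
\max_{i=1,2} |U_i(\xi)| \geq \tfrac{1}{2}\bigl(|A(\xi)|+|B(\xi)|\bigr)^{-1} \geq c(d_1,d_2,\xi)\, H^{-d_1-d_2+1},
\]
which is the claimed inequality.

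The only genuine content is producing the Bézout relation with controlled height; this amounts to solving a linear system whose matrix is (a permutation of) the Sylvester matrix and bounding the $(d_1+d_2-1)\times(d_1+d_2-1)$ subdeterminants by Hadamard's inequality. Everything else is routine: the resultant non-vanishes, the identity is evaluated, and the triangle inequality finishes the job.
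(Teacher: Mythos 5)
Your proof is correct: since $d_1,d_2>0$ and the polynomials are coprime over $\mathbb{Z}[T]$ (hence over $\mathbb{Q}[T]$ by Gauss), the resultant is a nonzero integer, the B\'ezout cofactors $A,B$ are $(d_1+d_2-1)\times(d_1+d_2-1)$ Sylvester minors bounded by Hadamard's inequality by $\ll_{d_1,d_2}H^{d_1+d_2-1}$, and evaluating $AU_1+BU_2=R$ at $\xi$ yields exactly the exponent $-(d_1+d_2-1)=-d_1-d_2+1$. The paper gives no proof of this lemma, deferring to Lemma~3.1 of Bugeaud and Schleischitz, and that reference's argument is the same resultant/Sylvester-matrix computation, so your write-up matches the intended proof.
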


The claim is also true for complex $\xi$ with the same proof.
The next crucial lemma stems from Lemma~\ref{sb} by a pigeon hole
principle. Although not particularly deep, it
appears to be new.

\begin{lemma} \label{lemur}
	Let $d\geq 1$ be an integer, $\xi$ be a real number 
	and $\mu>2d-1$ a real number. Then
	for every $H>1$, at most $\ll_{\mu} \log H$ pairwise coprime 
	integer polynomials $Q$
	of degree at most $d$ satisfy the estimates
	\begin{equation}  \label{eq:hei}
	H(Q)\le H, \qquad |Q(\xi)| \leq H(Q)^{-\mu}.
	\end{equation}
\end{lemma}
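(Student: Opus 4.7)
The plan is to order the polynomials by height and exploit Lemma~\ref{sb} on consecutive pairs. Specifically, suppose $Q_1,\ldots,Q_N$ are pairwise coprime integer polynomials of degree at most $d$ satisfying \eqref{eq:hei}, indexed so that $H(Q_1)\leq H(Q_2)\leq \cdots\leq H(Q_N)\leq H$. For each consecutive pair $Q_i,Q_{i+1}$, pairwise coprimality ensures they share no non-constant common factor, so Lemma~\ref{sb} applies and yields
\[
\max\bigl(|Q_i(\xi)|,|Q_{i+1}(\xi)|\bigr)\geq c\,H(Q_{i+1})^{-(2d-1)}
\]
for a constant $c=c(d,\xi)>0$. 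On the other hand, the hypothesis \eqref{eq:hei} combined with $H(Q_i)\leq H(Q_{i+1})$ forces this maximum to be bounded above by $H(Q_i)^{-\mu}$, since raising the smaller height to the negative power $-\mu$ gives the larger value.

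Equating the two bounds yields the recursive growth estimate
\[
H(Q_{i+1})\geq c^{1/(2d-1)} H(Q_i)^{\gamma},\qquad \gamma:=\frac{\mu}{2d-1}>1,
\]
which is the heart of the argument. Setting $h_i=\log H(Q_i)$ and making the affine substitution $\tilde{h}_i=h_i+\log(c^{1/(2d-1)})/(\gamma-1)$ converts this into the clean doubling-type inequality $\tilde{h}_{i+1}\geq \gamma\tilde{h}_i$. After discarding an $O_{\mu}(1)$ number of polynomials whose heights lie below a threshold depending only on $\mu$ and $\xi$, one has $\tilde{h}_i\geq 1$ for all remaining indices, and iterating gives $\tilde{h}_N\geq \gamma^{N-1}$.

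Since $\tilde{h}_N\leq \log H+O_\mu(1)$, rearranging yields $N\ll_{\mu}\log\log H$, which certainly implies the claimed bound $N\ll_{\mu}\log H$. (In fact the argument gives a strictly stronger conclusion than stated, which is presumably why the author wrote $\log H$ — it is the convenient form to cite later.) The one place to exercise care is the constant $c$ from Lemma~\ref{sb}: if it is less than $1$ the naive iteration $h_{i+1}\geq \log c + \gamma h_i$ could a priori stall, and the affine shift above is precisely what fixes this; the finitely many polynomials of too-small height that are dropped cost only an additive $O_\mu(1)$ in the final count. I do not anticipate any deeper obstacle: coprimality between consecutive elements (inherited from pairwise coprimality) is exactly the hypothesis Lemma~\ref{sb} needs, and the condition $\mu>2d-1$ is exactly what makes $\gamma>1$, which drives the super-geometric growth.
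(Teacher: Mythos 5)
Your proof is correct and follows essentially the same route as the paper's: order the coprime polynomials by height, apply Lemma~\ref{sb} to consecutive pairs to force the logarithms of the heights to grow geometrically (ratio $\mu/(2d-1)>1$), and discard the bounded set of small-height polynomials to absorb the constant from Lemma~\ref{sb}. Your bookkeeping even makes explicit the stronger $\ll_\mu \log\log H$ count that the paper's argument also yields but does not state.
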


The lemma applies in particular to pairwise distinct irreducible polynomials.

\begin{proof}[Proof of Lemma~\ref{lemur}]
	Let 
	\begin{equation}  \label{eq:null}
	\epsilon= \frac{\frac{\mu}{2d-1}-1}{2}>0, \qquad\qquad \theta=\frac{\mu}{1+\epsilon}>2d-1.
	\end{equation}
	Then by Lemma~\ref{sb} for $d_1=d_2=d$, for large $H\geq H_0=H_{0}(d,\theta)$
	the properties
	\[
	H=\max_{i=1,2} H(U_i),\qquad  |U_i(\xi)| \leq H^{-\theta},\qquad\qquad i\in\{1,2\},
	\] 
	cannot both hold for two distinct polynomials $U_1, U_2$
	of degree at most $d$ and without common factor. 
	Thus these pairs of counterexamples are contained in $\mathcal{T}$ defined as the finite set
	of integer polynomials of degree at most $d$ and of height at most $H_0$.
	
	Let $Q_1, Q_2, \ldots,Q_{h}$ for some $h>0$ be a collection of polynomials
	as in the lemma satisfying \eqref{eq:hei}, 
	ordered by increasing heights (and arbitrary labelling when heights coincide).
	First assume there are $Q_v, Q_{v+1}$ with $v$
	large enough that $H(Q_v)\geq H_{0}$ and
	the property 
	\begin{equation}  \label{eq:v}
	1 \leq \frac{\log H(Q_{v+1})}{\log H(Q_v)} \leq 1+\epsilon.
	\end{equation}
	Then 
	for $H:= H(Q_{v+1})$, by combining \eqref{eq:hei}, \eqref{eq:null} and \eqref{eq:v} both $Q_v, Q_{v+1}$ satisfy
	\[
	H(Q_i)\leq H, \qquad  |Q_i(\xi)|\leq H^{-\theta} , \qquad\qquad i\in \{v,v+1\}.
	\]
	This
	contradicts our claim above. Hence we cannot have 
	\eqref{eq:v} for any large $v$, thus
	\[
	\frac{\log H(Q_{v+1})}{\log H(Q_v)} > 1+\epsilon, \qquad v\geq v_0.
	\]
	But this means that up to a given height $H$ we can have
	at most $\sharp \mathcal{T}+ \log_{1+\epsilon} (H/H_0)\ll \log H$ 
	many polynomials as in the lemma, where $\sharp \mathcal{T}$ denotes the cardinality of $ \mathcal{T}$
	and the logarithm notation means taking 
	the logarithm to base $1+\epsilon$. 
	The proof of the lemma is complete.
\end{proof}

It would be desirable to establish non-trivial bounds
for $\mu$ a bit smaller than in the lemma.
For convenience of the reader 
we next provide a short proof of some well-known fact that 
will be applied occasionally.

\begin{proposition} \label{pop}
	If $\alpha$ is a root of a polynomial $P$ of degree $n$ and height 
	$H(P)=H$, then if $|\xi-\alpha|\leq 1$ we have
	\[
	|P(\xi)|\ll_{n,\xi} H |\xi-\alpha|.
	\]
\end{proposition}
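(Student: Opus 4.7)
The plan is to exploit the fact that $P(\alpha)=0$ in order to factor out $(\xi-\alpha)$ from $P(\xi)$, and then to bound the remaining factor by a constant depending only on $n$ and $\xi$. This is a purely elementary manipulation, but it will be convenient to do it explicitly rather than invoking the mean value theorem, so as to keep track of the dependencies.

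First I would write $P(\xi)=P(\xi)-P(\alpha)$ and apply the telescoping identity
\[
\xi^{i}-\alpha^{i} = (\xi-\alpha)\sum_{j=0}^{i-1} \xi^{j}\alpha^{i-1-j}
\]
to each monomial of $P$. If $P(T)=\sum_{i=0}^{n} c_{i}T^{i}$, this yields the factorization $P(\xi)=(\xi-\alpha)R(\xi,\alpha)$ where
\[
R(\xi,\alpha) = \sum_{i=1}^{n} c_{i} \sum_{j=0}^{i-1} \xi^{j}\alpha^{i-1-j}.
\]

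Next I would bound $|R(\xi,\alpha)|$ by the triangle inequality. Using $|c_i|\leq H(P)=H$ for all $i$, this gives
\[
|R(\xi,\alpha)| \leq H \sum_{i=1}^{n} i \cdot M^{i-1}, \qquad M := \max(1,|\xi|,|\alpha|).
\]
The crucial point is that the hypothesis $|\xi-\alpha|\leq 1$ forces $|\alpha|\leq |\xi|+1$, so $M\leq 1+|\xi|$ is bounded in terms of $\xi$ alone, independently of the particular polynomial $P$ or root $\alpha$ (so also of $H$). Consequently $|R(\xi,\alpha)| \leq n^{2}(1+|\xi|)^{n-1} H \ll_{n,\xi} H$, and multiplying by $|\xi-\alpha|$ yields the claimed estimate $|P(\xi)|\ll_{n,\xi} H|\xi-\alpha|$.

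There is no serious obstacle here; the only point requiring the hypothesis $|\xi-\alpha|\leq 1$ is the bound on $|\alpha|$, and without it the constant in $\ll_{n,\xi}$ would have to depend on $|\alpha|$ as well. Note that the proof works verbatim for complex $\xi$, which will be useful when the proposition is combined with Lemma~\ref{sb} applied to complex points.
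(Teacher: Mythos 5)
Your proof is correct. It takes a slightly different route from the paper: the paper writes $|P(\xi)|=|P(\xi)-P(\alpha)|=|\xi-\alpha|\cdot|P'(\eta)|$ via the mean value theorem for some $\eta$ between $\xi$ and $\alpha$, and then bounds $|P'(\eta)|\leq (n+1)\max\{1,|\eta|^n\}H(P')\ll_{n,\xi}H$ using $H(P')\leq nH(P)$ and $|\eta|\leq|\xi|+1$; you instead factor $P(\xi)-P(\alpha)=(\xi-\alpha)R(\xi,\alpha)$ explicitly via the telescoping identity and bound $R$ directly. The two arguments are essentially the same computation packaged differently, and your constant $n^2(1+|\xi|)^{n-1}$ is of the same quality. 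One genuine advantage of your version is the remark at the end: the explicit factorization literally works for complex $\xi$ and $\alpha$, whereas the mean value theorem as stated does not apply to complex-valued functions and the paper's one-line proof would need a small patch (e.g.\ integrating $P'$ along the segment from $\alpha$ to $\xi$, or falling back on exactly your factorization) to justify the claim, made right after the proof, that the statement holds for $\alpha,\xi\in\mathbb{C}$. Since the complex case is actually used in the proof of Theorem~\ref{t6}, your formulation is the more robust one.
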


\begin{proof}
	Since $P(\alpha)=0$ by intermediate value theorem of differentiation
	\[
	|P(\xi)|= |P(\xi)-P(\alpha)|= |\xi-\alpha|\cdot  |P^{\prime}(\eta)|
	\]
	for some $\eta$ between $\xi$ and $\alpha$. Hence $|\eta|\leq |\xi|+1$
	 and as $P^{\prime}$ has height $H(P^{\prime})\leq nH(P)$ we estimate
	\[
	|P^{\prime}(\eta)|\leq (n+1)\max\{ 1,|\eta|^n\}\cdot H(P^{\prime})
	\ll_{n,\xi} H
	\]
	and the claim follows.
	\end{proof}

Again the proof works for $\alpha, \xi\in\mathbb{C}$ as well.
Finally we require two estimates from analytic number theory
that have already been quoted above at some places.

\begin{lemma}  \label{rop}
	Let $\epsilon>0$. The number of divisors $\tau(N)$ of an integer $N\neq 0$ is $\tau(N)\ll_{\epsilon} |N|^{\epsilon}$. 
	The number of its prime divisors satisfies $\omega(N)\leq (1+\epsilon)\log N/\log\log N$ for $N\geq N_{0}(\epsilon)$. If $N$
	is primorial, then $\omega(N)\geq (1-\epsilon)\log N/\log\log N$ for $N\geq N_{0}(\epsilon)$.
	\end{lemma}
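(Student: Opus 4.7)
All three claims are classical and the plan is to reduce each to a short calculation using the multiplicative structure of $\tau$ and the Prime Number Theorem (or Chebyshev-type estimates) applied to primorials.

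For the divisor bound $\tau(N)\ll_{\epsilon}|N|^{\epsilon}$, I would exploit that the quotient $\tau(N)/|N|^{\epsilon}$ is multiplicative. Writing $|N|=\prod_i p_i^{a_i}$, we have
\[
\frac{\tau(N)}{|N|^{\epsilon}}=\prod_{i}\frac{a_i+1}{p_i^{a_i\epsilon}}.
\]
For every prime $p_i\geq 2^{1/\epsilon}$ one has $p_i^{a_i\epsilon}\geq 2^{a_i}\geq a_i+1$, so each such factor is $\leq 1$. Only the finitely many primes $p<2^{1/\epsilon}$ contribute non-trivially, and for each such $p$ the quantity $\sup_{a\geq 0}(a+1)/p^{a\epsilon}$ is a finite constant depending only on $\epsilon$. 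Their product is the absolute constant $c(\epsilon)$ and yields the claim.

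For the upper bound on $\omega(N)$, I would compare $N$ to the primorial on its first $k=\omega(N)$ primes. If $p_1<p_2<\cdots<p_k$ denote the distinct prime divisors of $N$ and $q_1<q_2<\cdots<q_k$ the first $k$ primes, then $p_i\geq q_i$, so $|N|\geq q_1q_2\cdots q_k=q_k\#$. Taking logarithms and applying the Prime Number Theorem in the form $\log(q_k\#)=\theta(q_k)=(1+o(1))q_k$ together with $q_k=(1+o(1))k\log k$ gives
\[
\log|N|\geq (1+o(1))\,k\log k,
\]
which inverts to $k\leq (1+\epsilon)\log|N|/\log\log|N|$ once $|N|$ is large enough. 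Exactly the same identities evaluated at $N$ itself primorial yield $\log N=\theta(p_k)=(1+o(1))p_k$ and $p_k=(1+o(1))k\log k$, hence $k\geq (1-\epsilon)\log N/\log\log N$ for $N\geq N_0(\epsilon)$, proving the third claim.

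The only mildly delicate step is the asymptotic inversion of the relation $\log N\sim k\log k$ into a bound on $k$ with a clean $\epsilon$; this is routine and can be done by taking one more logarithm, observing $\log\log N=\log k+\log\log k+o(1)$, and dividing. No deeper input than PNT is required, and in fact Chebyshev's bounds $\theta(x)\asymp x$ suffice for the qualitative $(1\pm\epsilon)$ forms once one is willing to replace $q_k\sim k\log k$ by the corresponding one-sided Chebyshev estimate. I expect no real obstacle; the proof is essentially bookkeeping around the primorial.
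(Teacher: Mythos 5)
Your proof is correct and follows the standard route the paper itself relies on: the paper cites Apostol for the divisor bound (your multiplicativity argument over prime powers is exactly that textbook proof) and states that the claims on $\omega$ ``follow easily from Prime Number Theorem'', which is precisely your primorial-comparison argument. One small caveat: your closing remark that Chebyshev's bounds $\theta(x)\asymp x$ would suffice for the sharp constants $(1\pm\epsilon)$ is not accurate --- those genuinely require $\theta(x)\sim x$, i.e.\ PNT --- but this is a side remark and does not affect your actual argument, which correctly invokes PNT.
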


 The first estimate can be found for example in the book of Apostol~\cite[page~296]{apostol}, the claims on $\omega$ can 
be obtained from the Prime Number Theorem and are well-known.

\section{A large set of polynomials small at $\xi$}    \label{small}

We will assume $n\geq 3$ as otherwise the claims Theorem~\ref{t1},~\ref{t2} follow from \eqref{eq:jm} and \eqref{eq:stand}. 
Moreover, we can assume
\[
\wo > n
\]
since otherwise if $\wo=n$ by \eqref{eq:w} and \eqref{eq:ds} 
we get $\ws\geq \wo=n$, a
stronger claim than Theorem~\ref{H} and Theorem~\ref{t1}, and by \eqref{eq:stand} 
we derive Theorem~\ref{t2} as well.

Let $\epsilon>0$.
By Lemma~\ref{hiw} and the definition of the exponent $w_{n}(\xi)$,
there exist {\em irreducible} polynomials
$P$ of arbitrarily large height (but maybe of degree
less than $n$) and
\begin{equation} \label{eq:ipo}
|P(\xi)| \leq H(P)^{-\w + \epsilon }.
\end{equation}
In fact this is the original formulation of~\cite[Hilfssatz~1]{wirsing}.
We stress here that in fact for our method below we 
only require the weaker estimate
\begin{equation}  \label{eq:enough}
|P(\xi)| \leq H(P)^{-\wo + \epsilon },
\end{equation}
for some irreducible $P$.
This will enable us to transition to different $P$ if needed below.

Write $P_k$ for a sequence of $P$ with the above properties and $H(P_k)\to\infty$, and
let for simplicity $H_{k}= H(P_{k})$ denote the height of $P_{k}$. 
Let $c\in (0,1)$ be small enough that every 
integer polynomial of degree at most $n$ and 
height at most $cH_{k}$ is not a multiple of $P_{k}$,
which can be done by Gelfond's Lemma~\ref{gel} 
(this argument already goes back
to Wirsing~\cite{wirsing} as well). By definition of $\wo$,
for large $k$ there is a non-zero integer 
polynomial $Q_{k}$ of degree at most $n$ so that 
\begin{equation}  \label{eq:duenn}
H(Q_k) \leq cH_{k}, \qquad |Q_{k}(\xi)| < H_{k}^{ -\wo +\epsilon }.
\end{equation}
We may assume
\begin{equation}  \label{eq:wemay}
T\nmid Q_k
\end{equation}
as otherwise if $Q_k(T)= T^{h_k}Q_{k}^{\ast}(T)$
for some $h_k\in \mathbb{Z}$ and $Q_{k}^{\ast}\in \mathbb{Z}[T]$ with $Q_{k}^{\ast}(0)\neq 0$,
then we take instead $Q_k^{\ast}$ which shares the properties we want (coprime to $P_k$, same height $H(Q_{k}^{\ast})= H(Q_k)$, and $|Q_k^{\ast}(\xi)|\asymp_{n,\xi} |Q_k(\xi)|$) so
the property \eqref{eq:duenn} is preserved.
Since $P_{k}$ is irreducible the polynomials $P_{k}, Q_{k}$ have no
common factor. Let $u_{k}$ be the degree of $P_{k}$, and $f_{k}$ the
degree of $Q_k$. Let
\[
\delta\in (0,\; \min\{ 1/3,\; \wo-n\})
\]
arbitrary but fixed.
Now for every $k\geq 1$
we consider the set $\mathcal{S}_{k}=\mathcal{S}_{k}(\delta)$ 
consisting of the integer polynomials $S_{\ell}(T)= S_{k,\ell}(T)$ in
variable $T$ defined by
\[
S_{\ell}= \ell T^{n-u_{k}}P_{k} + Q_{k}, \qquad\qquad 1\leq \ell \leq H_{k}^{\delta},\; \ell \; \text{prime}.
\]
Up to at most two 
exceptional values of $\ell$ for each $k$,
all $S_{\ell}$ have degree precisely $n$
and height at least $H_k/2$ (each condition 
having at most one exceptional value).
Removing these two $\ell$ if necessary,
write $\mathscr{L}=\mathscr{L}(k)$ for the remaining 
set of primes as above, and let 
$\mathcal{S}_{k}=\{ S_{k,\ell}\!\!: \ell\!\in\! \mathscr{L}\}$. Then
\begin{equation}  \label{eq:001a}
\min_{\ell\in \mathscr{L}(k) } H(S_{k,\ell}) \ge H_{k}/2.
\end{equation}
Consequently, we may choose $H_k$ growing fast enough that the
sets $\mathcal{S}_{k}$ are disjoint for distinct $k$.
We occasionally omit the dependence of $k$ in $S_{\ell}$ 
in notation for readability. Let us also directly 
define the twisted polynomials
\[
R_{\ell}=R_{k,\ell}=T^{n-u_k}P_k+\ell Q_k, \qquad\qquad 1\leq \ell \leq H_{k}^{\delta},\; \ell \; \text{prime},
\] 
that will be required for the proofs 
of Theorems~\ref{t5}, \ref{units}, \ref{t6}. 
We notice that again for each $k$ and at most one index $\ell$ the polynomial 
$R_{k,\ell}$ has degree less than $n$ (if so then $\deg(Q_{k})=n$), and 
moreover for all but at most one exceptional index $\ell$ 
we get $H(R_{\ell})>H_k^{1/2}/2$. Indeed, since
$\ell\le H_k^{\delta}$, $\delta<1/3$, is small compared to $H_k$, for large $k$ we certainly require $H(Q_k)\ge H_k^{1/2}$ for an index $\ell_0$ to exist such that
$H(R_{\ell})< H_k^{1/2}/2$. But if so, then for any other index $\ell\ne \ell_0$
we have $H(R_{\ell})> H_k^{1/2}/2$. Denote by $\mathscr{R}=\mathscr{R}(k)$ the remaining
set of primes excluding at most two indices where both conditions are true and let
$\mathcal{R}_k=\{ R_{k,\ell}: \ell\in \mathscr{R}(k) \}$.
So by construction
\begin{equation}  \label{eq:001b}
\min_{\ell\in \mathscr{R}(k) } H(R_{k,\ell})\ge H_k^{1/2}/2.
\end{equation}
An obvious conseuqence of \eqref{eq:001a}, \eqref{eq:001b} that will mostly
be sufficient is
\begin{equation} \label{eq:non}
\lim_{k\to\infty} \min_{\ell\in \mathscr{L}(k) } H(S_{k,\ell})=\infty, \qquad \lim_{k\to\infty} \min_{\ell\in \mathscr{R}(k) } H(R_{k,\ell})=\infty.
\end{equation}
Below we will mostly restrict to presenting results for $S_{\ell}$ only but it is
understood they also hold for $R_{\ell}$ with a very similar proof. Only 
if there are notable twists in the proofs for $R_{\ell}$ we
will separately comment on the modifications to be done.
Since $\delta<1$,
by Prime Number Theorem
the cardinality of the set $\mathcal{S}_{k}$ is 
$\gg H_{k}^{\delta}/ \log H_{k}$ with an absolute implied
constant.
Moreover for every $\ell\in \mathscr{L}$ 
we see
\begin{equation}  \label{eq:hirte}
H(S_{\ell}) \leq \ell H(P_k) + H(Q_k) \leq 2H_{k}^{1+\delta}
\end{equation}
and from \eqref{eq:enough}, \eqref{eq:duenn} moreover
\begin{equation}  \label{eq:kuh}
|S_{\ell}(\xi)| \leq |\ell|\cdot |P_{k}(\xi)| + |Q_{k}(\xi)| \leq
(\ell+1)H_{k}^{ -\wo +\epsilon }
\ll H_{k}^{ -\wo +\delta+\epsilon }.
\end{equation}
Notice that indeed the bound in
\eqref{eq:enough} in place of \eqref{eq:ipo}
on $P(\xi)$ is sufficient for these claims.
Combining the last two estimates \eqref{eq:hirte}, \eqref{eq:kuh}, we see in particular
\begin{equation} \label{eq:hase}
|S_{\ell}(\xi)| \ll H(S_{\ell})^{-\wo+\delta^{\prime}},\qquad
\delta^{\prime}>0,
\end{equation}
where $\delta^{\prime}$ is arbitrarily small for small enough
$\delta, \epsilon$, 
and independent from $\ell\in \mathscr{L}$.
In view of Theorem~\ref{wbs},
the key point is to show that at least two of the $S_{\ell}, \ell\in\mathscr{L}$, 
are irreducible. Indeed we prove there are many such polynomials.
To do this, we use our information
on $P_k, Q_k$ and distinguish several situations.
First observe that at least one of the four following cases must occur for 
infinitely many $k$:

\begin{itemize}
    \item Case 1: $u_k<n, f_k<n$.
	\item Case 2: $u_k<n, f_k=n$.
	\item Case 3: $u_k=f_k=n$.
	\item Case 4: $u_k=n, f_k<n$.
\end{itemize}

We treat each case separately. Before we do so, we state
a few general observations.
Fix $k$. Let $\textbf{I}=\textbf{I}(k)\subseteq \mathscr{L}(k)$ denote the set of indices $\ell$
for which $S_{\ell}\in \mathcal{S}_{k}$ is reducible, meaning
it has a proper factor, i.e. not constant or with constant cofactor. 
A constant factor does not matter as we can consider $S_{\ell}/d\in\mathbb{Z}[T]$
if $d$ divides all coefficients of $S_{\ell}$, which still has 
the desired properties \eqref{eq:hirte}, \eqref{eq:kuh}.
We want to show that the set $\textbf{I}$ is small. 
%
%
It follows from Wirsing's Lemma~\ref{hiw} 
and \eqref{eq:hase} that for $\ell\in \textbf{I}$
we can find a factorization
\[
S_{\ell} = A_{\ell} B_{\ell},\qquad  \text{resp.} \;\; R_{\ell} = A_{\ell} B_{\ell} 
\] 
where $A_{\ell}$ is irreducible of degree at most $n-1$ and satisfies
\begin{equation} \label{eq:horse}
|A_{\ell}(\xi)|
\ll_{n,\xi} H(A_{\ell})^{ -\wo + 2\delta^{\prime} },
\end{equation}
with $\delta^{\prime}$ as above. Anywhere in the paper
$A_{\ell}$ will be implicitly considered irreducible.
We may assume $H(A_{\ell})\to \infty$ as $k\to\infty$ for any $\ell=\ell(k)\in\mathscr{L}(k)$ resp. $\ell=\ell(k)\in\mathscr{R}(k)$
under consideration by definition of $\mathscr{L}(k)$, $\mathscr{R}(k)$ and \eqref{eq:non} and Remark~\ref{lamm}. 
The same argument apply for $R_{\ell}$.
Moreover, keep in mind for the sequel the following fact.

\begin{proposition}  \label{aktiv}
	Two distinct polynomials
	in $\mathcal{S}_{k}$ resp. in $\mathcal{R}_{k}$ have no common non-constant polynomial divisors.
\end{proposition}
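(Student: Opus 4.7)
My plan is to prove Proposition~\ref{aktiv} by a direct linear-combination argument together with the coprimality and irreducibility data built into the definition of $\mathcal{S}_k$. Suppose for contradiction that a non-constant $D \in \mathbb{Z}[T]$ divides both $S_\ell$ and $S_{\ell'}$ for some distinct primes $\ell, \ell' \in \mathscr{L}$. The first step is to form the two $\mathbb{Z}$-linear combinations
$$S_\ell - S_{\ell'} = (\ell - \ell')\,T^{n-u_k} P_k, \qquad \ell' S_\ell - \ell S_{\ell'} = (\ell' - \ell)\,Q_k,$$
both of which are divisible by $D$. Since $\ell \ne \ell'$ makes $\ell - \ell'$ a non-zero integer constant, passing to $\mathbb{Q}[T]$ lets me conclude that $D$ divides $T^{n-u_k} P_k$ and $Q_k$ simultaneously.

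Next I would exploit unique factorization in $\mathbb{Q}[T]$: write $D = c\,T^a E$ with $E$ coprime to $T$. From $D \mid T^{n-u_k} P_k$ and the irreducibility of $P_k$, the factor $E$ is either a non-zero rational constant or a rational scalar multiple of $P_k$. The second alternative is excluded, because $D \mid Q_k$ would then force $P_k \mid Q_k$, contradicting the coprimality of $P_k$ and $Q_k$ built into the setup just above Proposition~\ref{aktiv}. Hence $E$ is constant, so $D = c\,T^a$; for $D$ to be non-constant I need $a \ge 1$, and then $T \mid D \mid Q_k$ gives $Q_k(0) = 0$.

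The main obstacle is therefore to rule out the degenerate case $Q_k(0) = 0$. When $u_k = n$ (Cases~3 and 4 of Section~\ref{small}) the exponent $n-u_k$ vanishes, so no power of $T$ can appear in $D$ and the argument closes immediately. When $u_k < n$, I would normalize $Q_k$ at the outset by dividing out its highest power of $T$: replacing $Q_k$ by $Q_k/T^j$ where $T^j \| Q_k$ shrinks $|Q_k(\xi)|$ by the bounded factor $|\xi|^{-j}$ (absorbed harmlessly into the $\epsilon,\delta'$ fudges in \eqref{eq:hase}), does not increase the height, and preserves coprimality with $P_k$. Under this preliminary normalization $Q_k(0) \ne 0$, and the argument of the previous paragraph yields the desired contradiction. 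The only conceptual hurdle in the whole proof is precisely this $T$-factor bookkeeping; the rest is a routine gcd calculation driven by the linear-combination identities above.
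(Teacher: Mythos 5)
Your proof is correct and takes essentially the same route as the paper's: the common divisor must divide $S_{\ell}-S_{\ell'}=(\ell-\ell')\,T^{n-u_k}P_k$, and the coprimality of the irreducible $P_k$ with $Q_k$ forces it to be constant. You are in fact a bit more careful than the paper's two-line argument, which describes the difference simply as a scalar multiple of $P_k$ and leaves the power-of-$T$ issue to the normalization $Q_k(0)\neq 0$ performed elsewhere (Section~\ref{case2}); your explicit treatment of that degenerate case via the same normalization is consistent with the paper and closes the only potential gap.
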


\begin{proof}
		Any common divisor of some $S_{\ell_1}$ and $S_{\ell_2}$
	for $\ell_1\neq \ell_2$ would have to divide their difference 
	$S_{\ell_1}-S_{\ell_2}$ which is a 
	non-zero scalar multiple of $T^{n-u_k}P_{k}$. 
	But clearly $P_{k}$ is coprime to all $S_{\ell}$ since $TP_{k}$
	and $Q_{k}$ are coprime by \eqref{eq:wemay}, so the divisor must be a constant.
    Similarly, $R_{\ell_1}-R_{\ell_2}$ is a scalar multiple of $Q_k$ which
    is clearly coprime to any $R_{\ell}$ again since $TP_{k}$
    and $Q_{k}$ are coprime, hence any joint divisor
    of $R_{\ell_1}, R_{\ell_2}$ must be constant.
\end{proof}

It turns out the proofs of the cases $n=6, n=7$, especially $n=7$, 
are more tidious, so we decide to treat them separately.

\section{Proof of Theorems~\ref{t1},~\ref{t2} for $n\leq 5$  } \label{n5}
 
 We treat the four cases from the last section separately.
 First we prove Theorem~\ref{t1} in each case, and later explain
 how to derive Theorem~\ref{t2}.

\subsection{Case 1} \label{case1}

As indicated, the main step is to show an irreducibility result as follows.

\begin{theorem}  \label{t3}
	For $n\leq 5$, the set $\textbf{I}$
	defined above has cardinality $\sharp \textbf{I}\ll H_k^{o(1)}$ 
	as $k\to\infty$.
	Hence there are $\gg H_{k}^{\delta}/\log H_{k}$
	irreducible polynomials in $\mathcal{S}_{k}$ resp. $\mathcal{R}_{k}$, each of them 
	of degree precisely $n$. 
\end{theorem}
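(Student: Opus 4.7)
The plan is to dissect $\textbf{I}$ according to whether $S_\ell$ admits a linear factor. In Case~1 we have $u_k, f_k < n$, so $\tilde{P}_k := T^{n-u_k}P_k$ has degree precisely $n$ with $\tilde{P}_k(0)=0$, while $Q_k$ has degree $f_k<n$. Assuming $Q_k(0)\neq 0$ (else one factors out a common power of $T$, a routine adjustment), irreducibility of $P_k$ together with the height bound $H(Q_k)\leq cH_k$ makes $\tilde{P}_k, Q_k$ coprime, so the pair fits the hypotheses of Theorem~\ref{t4}. That theorem immediately bounds by $\ll \Gamma' \ll H_k^{o(1)}$ the primes $\ell\in\mathscr{L}$ for which $S_\ell=\ell\tilde{P}_k+Q_k$ acquires a linear factor.

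For $n=3$, every reducible cubic integer polynomial has a linear factor, so this already concludes the proof. For $n\in\{4,5\}$ one must still treat the residual set of $\ell\in\textbf{I}$ for which every irreducible factor of $S_\ell$ has degree at least two, forcing the factorization to be $2{+}2$ when $n=4$ and $2{+}3$ when $n=5$. For such $\ell$, Wirsing's Lemma~\ref{hiw} applied to $S_\ell$ together with \eqref{eq:hase} furnishes an irreducible factor $A_\ell$ of $S_\ell$ of degree $d\in\{2,3\}$ satisfying $|A_\ell(\xi)|\ll H(A_\ell)^{-\wo+2\delta'}$, as recorded in \eqref{eq:horse}. By Proposition~\ref{aktiv} the $S_\ell$ are pairwise coprime across distinct $\ell\in\mathscr{L}$, hence so are the corresponding irreducible factors $A_\ell$.

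I then split the residual $\ell$ by $d=\deg A_\ell$ and apply Lemma~\ref{lemur} to each subcollection with $\mu=\wo-2\delta'$. For $d=2$ the hypothesis $\mu>3$ is immediate since $\wo\geq n\geq 4$; for $d=3$, which arises only when $n=5$, the hypothesis $\mu>5$ follows from the a priori reduction $\wo>n$ made at the start of Section~\ref{small}, provided $\delta,\epsilon$ are chosen small enough that $2\delta'<\wo-5$. Each degree class therefore contributes $\ll \log H_k$, so $\sharp\textbf{I}\ll\Gamma'+\log H_k\ll H_k^{o(1)}$. Since $\sharp\mathcal{S}_k\gg H_k^\delta/\log H_k$ by the Prime Number Theorem, removing $\textbf{I}$ leaves $\gg H_k^\delta/\log H_k$ irreducible $S_\ell$, each of degree exactly $n$ save for at most one degree-dropping exceptional $\ell$.

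The principal obstacle I anticipate is the tight margin $\wo>5$ demanded by Lemma~\ref{lemur} at $d=3$ when $n=5$; this is precisely what forces the reduction at the start of Section~\ref{small} to be phrased as the strict inequality $\wo>n$ rather than $\wo\geq n$, and it is also the reason why the argument for $n\geq 6$, where factorizations such as $3{+}3$ or $3{+}4$ become possible and the corresponding Lemma~\ref{lemur} thresholds climb to $\wo>5,7$, becomes genuinely more delicate and requires the supplementary hypothesis $\wo>2n-7$.
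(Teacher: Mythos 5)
Your proposal is correct and follows essentially the same route as the paper: the paper splits $\textbf{I}$ into $\textbf{J}^{(2)}$ (where $\deg A_\ell=n-1$, i.e.\ $S_\ell$ has a linear factor, handled by Theorem~\ref{t4}) and $\textbf{J}^{(1)}$ (where $\deg A_\ell\leq n-2$, handled by Lemma~\ref{lemur} via the same threshold computation $2(n-2)-1\leq n<\wo$), which is exactly your dichotomy in slightly different packaging. Your identification of the tight margin at $d=3$, $n=5$ and of the role of the strict inequality $\wo>n$ matches the paper's reasoning.
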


Fix large $k$.
We split the set $\textbf{I}$ into two sets
\[
\textbf{I}= \textbf{J}^{(1)} \cup \textbf{J}^{(2)}
\]
where for $A_{\ell}$ as above the subsets are given by
\[
 \textbf{J}^{(1)}= \{ \ell \in  \textbf{I}: 1\leq \deg(A_{\ell})\leq n-2 \}, \qquad  \textbf{J}^{(2)}= \{ \ell \in  \textbf{I}: \deg(A_{\ell})= n-1 \}.
\]
The irreducible polynomial divisor $A_{\ell}$ of $S_{\ell}$ defined above may not be unique, however this is not a problem as we may allow that the union is not disjoint.
We first show that $\textbf{J}^{(1)}$ is small, 
more precisely has cardinality $\sharp \textbf{J}^{(1)}\ll \log H_k$.
Notice Gelfond's Lemma~\ref{gel} and \eqref{eq:hirte} imply
\begin{equation} \label{eq:geller}
H(A_{\ell}) \leq H(A_{\ell})H(B_{\ell})
\ll_{n} H(A_{\ell}B_{\ell}) = H(S_{\ell})\leq 2H_{k}^{1+\delta}.
\end{equation}
Recall
we can assume strict inequality $\wo>n$.
Hence we may assume that $\delta$ and $\epsilon$ and thus 
$\delta^{\prime}$ are small enough so that the exponent in 
\eqref{eq:horse} is still strictly 
smaller than $-n$, i.e
\begin{equation}  \label{eq:dd}
|A_{\ell}(\xi)|
\ll H(A_{\ell})^{ -\theta }, \qquad \theta>n.
\end{equation}
Now, on the other hand, for $n\leq 5$ notice that
$2\deg(A_{\ell})-1\leq 2(n-2)-1\leq n<\theta$.  
Thus in view of Lemma~\ref{lemur} with $d= \deg(A_{\ell})$,
only $\ll \log H_k$ many distinct polynomials $A_{\ell}$ may occur. 
Since the $A_{\ell}$ are pairwise distinct for different $\ell$ by Proposition~\ref{aktiv}, we conclude that only for $\ll \log H_k$ 
many $\ell\in \textbf{J}^{(1)}$ we may have
\eqref{eq:dd}. In other words $\sharp \textbf{J}^{(1)}\ll \log H_k$ as claimed. Notice we did not use the condition of case 1 here.

Now assume $\ell\in \textbf{J}^{(2)}$, i.e.
\begin{equation} \label{eq:n-1}
\deg (A_{\ell})=n-1, \qquad \deg(B_{\ell})=1.
\end{equation}
Then $S_{\ell}$ has a linear factor. But in our case 1 we can apply
Theorem~\ref{t4} to $P(T)=T^{n-u_{k}}P_{k}$
and $Q=Q_{k}$, which tells us that there is indeed only a small 
number $\sharp \textbf{J}^{(2)}\ll H_{k}^{o(1)}$ of these $\ell$ as well.
We point out that
the latter argument for estimating the cardinality
of $\textbf{J}^{(2)}$ does not require any 
restriction on $n$. 
Hence a total of
\[
\sharp(\mathscr{L}\setminus \textbf{I}) \geq
\sharp \mathscr{L}- \sharp \textbf{J}^{(1)}- \sharp \textbf{J}^{(2)} \gg
H_k^{\delta}/\log H_k-\log H_k- H_k^{o(1)}\gg H_k^{\delta}/\log H_k
\] 
indices $\ell\in \mathscr{L}\setminus \textbf{I}$ must
remain where $S_{\ell}$ is irreducible.
The proof of Theorem~\ref{t3} in case 1 is completed.

The completion of the proof of Theorem~\ref{t1} in case 1 is done
via Theorem~\ref{wbs}. We take any two irreducible polynomials $S_{\ell_{1}}, S_{\ell_{2}}$ from Theorem~\ref{t3}, i.e.
with $\ell_i \in \mathscr{L}\setminus \textbf{I}$,
which are obviously coprime and satisfy
\eqref{eq:hirte} and \eqref{eq:kuh} for arbitrarily small 
positive $\delta, \epsilon$. By Theorem~\ref{wbs} we get a root $\alpha$ of either $S_{\ell_{1}}$ or $S_{\ell_{2}}$ that satisfies
\[
|\alpha- \xi| \ll H(\alpha)^{-(\frac{3}{2}\wo -n +\frac{1}{2}) -1 +\varepsilon},
\]
for arbitrarily small $\varepsilon>0$ if $\delta, \epsilon$ are 
chosen sufficiently small.
Since $S_{\ell_{1}}$ and $S_{\ell_{2}}$ both are irreducible of degree
exactly $n$, the claim follows.

\subsection{Case 2}  \label{case2}
Again we prove a variant of Theorem~\ref{t3}. We again consider the set $\mathcal{S}_k$ and $\textbf{I}$.
The estimate $\textbf{J}^{(1)}\ll \log H_k$ works precisely as in
case 1. We have to estimate $\textbf{J}^{(2)}$.
So assume \eqref{eq:n-1} holds for
$S_{\ell}\in \mathcal{S}_k$ with $\ell\in \textbf{I}$. 
Write $B_{\ell}(T) = q_{\ell}T-p_{\ell}$ for each $\ell\in \textbf{I}$. We may assume $p_{\ell}, q_{\ell}$ are coprime, otherwise 
we consider $S_{\ell}/(p_{\ell},q_{\ell})\in \mathbb{Z}[T]$ in place of $S_{\ell}$, which has both smaller height and evaluation at $\xi$,
and the same argument below works. 
A problem that arises
is that here we cannot directly apply Theorem~\ref{t4} since
$Q_k$ has degree $n$. However, with some effort
we can still reduce our problem to case 1.
We first show the following claim.

\textbf{Claim:}
There are at most $\ll_{\xi} H_k^{o(1)}$ many indices $\ell \in \textbf{J}^{(2)}$
for which we have
\begin{equation}  \label{eq:assuan}
|B_{\ell}(\xi)| \leq 1.
\end{equation}

We prove the claim. 
We consider $k$ fixed and may write
\begin{equation} \label{eq:heute}
T^{n-u_k}P_{k}(T)= c_{1}T+c_{2}T^{2}+\cdots+c_{n}T^{n}, \qquad 
Q_k(T)=d_{0}+d_{1}T+\cdots +d_{n}T^{n},
\end{equation}
with $d_{0}, c_{n}$ non-zero. Indeed, \eqref{eq:wemay} implies $d_0\ne 0$,
and $c_{n}\neq 0$ since $P_k$ has 
exact degree $u_k$ by assumption. Notice $c_0=0$ since $n-u_k>0$ in case 2, 
and possibly some other $c_j$ vanish as well. We may assume $d_0>0$ as
$-Q_k$ shares the same properties with $Q_k$.

Now the identity $S_{\ell}(p_{\ell}/q_{\ell})= B_{\ell}(p_{\ell}/q_{\ell}) =0$ 
after multiplication with $q_{\ell}^n\neq 0$ can be written
in coefficients as
\begin{equation} \label{eq:det}
\ell(c_{1}p_{\ell}q_{\ell}^{n-1}+c_{2}p_{\ell}^{2}q_{\ell}^{n-2}+\cdots+c_{n}p_{\ell}^{n})
+d_{0}q_{\ell}^{n}+d_{1}p_{\ell}q_{\ell}^{n-1}+\cdots+d_{n}p_{\ell}^{n}=0.
\end{equation}
Reducing modulo $p_{\ell}$ we see $p_{\ell}| d_0$. Hence, 
among all $\ell\in \textbf{I}$, since $|d_0|\leq H(Q_k)<cH_k<H_k$
and by Lemma~\ref{rop}
at most $2\tau(d_0)\ll H_k^{o(1)}$ distinct integers $p_{\ell}$ can occur,
where $\tau$ denotes the number of positive divisors of an integer.
However, by assumption $|B_{\ell}(\xi)|=|q_{\ell}\xi-p_{\ell}|\leq 1$ and as $\xi$ is fixed, we see that 
$q_{\ell}$ can only take $\ll_{\xi} 1$ different values for given $p_{\ell}$.
Hence in total we only get $\ll_{\xi} H_k^{o(1)}$ pairs $(p_\ell, q_\ell)$
or equivalently $\ll_{\xi} H_k^{o(1)}$ many distinct $B_{\ell}$ among
all $\ell\in \textbf{J}^{(2)}$.
Now it cannot happen that two distinct $\ell\in \textbf{J}^{(2)}$ induce
the same $B_{\ell}$ in view of Proposition~\ref{aktiv} and
since any $B_{\ell}$ divides $S_{\ell}$. 
Thus indeed the assumption \eqref{eq:assuan}
can only hold for $\ll_{\xi} H_k^{o(1)}$
many indices within $\textbf{J}^{(2)}$, and the claim is proved.

	For $R_{\ell}$ instead, in place of \eqref{eq:det} by $R_{\ell}(p_{\ell}/q_{\ell})= B_{\ell}(p_{\ell}/q_{\ell}) =0$  we have
	\[
	c_{1}p_{\ell}q_{\ell}^{n-1}+c_{2}p_{\ell}^{2}q_{\ell}^{n-2}+\cdots+c_{n}p_{\ell}^{n}
	+\ell(d_{0}q_{\ell}^{n}+d_{1}p_{\ell}q_{\ell}^{n-1}+\cdots+d_{n}p_{\ell}^{n})=0.
	\]
	Reducing modulo $q_n$ we see $q_{\ell}|c_n$. Hence again  
	only $2\tau(c_n)\ll H_k^{o(1)}$ many distinct $q_{\ell}$ can appear, and very similarly
	we again conclude that only few $B_{\ell}$ satisfying \eqref{eq:assuan} 
	are induced.

Now recall $\sharp \mathscr{L}\gg H_k^{\delta}/ \log H_k$. If the
index difference set $\mathscr{L} \setminus \textbf{I}$ has cardinality at
least two, then we pick any two indices in this set and apply the concluding argument of \S~\ref{case1} based on Theorem~\ref{wbs} and are done. Hence, in 
view of the claim above, we may assume
there are at least $\sharp \textbf{I}- H_k^{o(1)}\geq \sharp \mathscr{L}-1-\log H_k-H_{k}^{o(1)}\gg H_k^{\delta}/\log H_k$ 
many $\ell_0\in \textbf{J}^{(2)}$ for which we have
\[
|B_{\ell_0}(\xi)|> 1.
\]
Take any such index $\ell_0$. 
In view of \eqref{eq:kuh} we get 
\[
|A_{\ell_0}(\xi)| = \frac{ |S_{\ell_0}(\xi)| }{|B_{\ell_0}(\xi)|}
< |S_{\ell_0}(\xi)|\ll H_{k}^{ -\wo +\delta+\epsilon },
\]
and $\delta, \epsilon$ can be arbitrarily small. Hereby we should notice
that again \eqref{eq:enough} suffices for these conclusions,
i.e. the last inequality above, as in case 1.
But $A_{\ell_0}$ has degree $n-1<n$ and by Gelfond's Lemma~\ref{gel}
is has height $H(A_{\ell_0})\ll H(S_{\ell_0})\leq H_{k}^{1+\delta}$. 
Moreover, it is clearly also coprime to $P_{k}$ by Proposition~\ref{aktiv}. 
Thus we can use $A_{\ell_0}$ instead of $Q_{k}$ in the argument of case 1, 
that is we consider the set $\widetilde{\mathcal{S}}_k$ consisting of the polynomials 
\[
\widetilde{S}_{\ell}(T) = \ell T^{n-u_k} P_{k} + A_{\ell_0}, \qquad\qquad 1\leq \ell \leq H_{k}^{\delta},\; \ell \; \text{prime}. 
\]
We prove Theorem~\ref{t3} for $\widetilde{\mathcal{S}}_k$.
Again these integer polynomials all have exact degree $n$, satisfy
\begin{equation}  \label{eq:A}
H(\widetilde{S}_{\ell}) \ll H_{k}^{1+\delta}
\end{equation}
and
\begin{equation}  \label{eq:B}
|\widetilde{S}_{\ell}(\xi)| \ll \ell\cdot |P_{k}(\xi)| + |A_{\ell_0}(\xi)| 
\ll H_{k}^{ -\wo +\delta+\epsilon },
\end{equation}
and proceeding as in case 1, whose hypothesis applies, 
we see that for many $\ell\in\mathscr{L}$ 
the polynomials $\widetilde{S}_{\ell}$ are irreducible. 
For the accordingly defined polynomials $\widetilde{R}_{\ell}$ 
the exponent in \eqref{eq:B} 
is $-\wo +2\delta+\epsilon$ instead, however
this is not critical for our argument below.
Thus we conclude by Theorem~\ref{wbs} as in case 1 again.

\subsection{Case 3} 
Here again we cannot use Theorem~\ref{t4} immediately
since $P(0)=P_{k}(0)\neq 0$. Assume first $Q_k$ is irreducible
for infinitely many $k$. 
Then by Theorem~\ref{wbs}
either $P_k$ or $Q_k$ has a root
$\alpha$ satisfying
\[
|\alpha- \xi| \ll H(\alpha)^{-(\frac{3}{2}\wo -n +\frac{1}{2}) -1 +\varepsilon},
\]
for arbitrarily small $\varepsilon>0$.
Since $P_k$ and $Q_{k}$ are now both irreducible of degree
exactly $n$ and $H_k\to \infty$ by assumption and clearly $H(Q_k)\to\infty$ follows from \eqref{eq:duenn}
and the transcendence of $\xi$, we are done. 

Assume otherwise $Q_k$ is reducible for all large enough $k$.
Then by Wirsing's Lemma~\ref{hiw}
it has a factor $W_k$ of degree less than $n$ and approximation quality
$\wo$, i.e. \eqref{eq:enough} holds for $P=W_k$.
By Remark~\ref{lamm} we may assume $H(W_k)\to\infty$ as $k\to\infty$.
Hence we replace $P_k$ by $W_k$ and accordingly redefining $Q_k$ 
we find ourselves in the situation of cases 1 or 2, for all large $k$.
Recalling we have seen that \eqref{eq:enough} suffices for the claim
in cases 1 and 2, this case is done as well.

\subsection{Case 4} 

This can be reduced to cases 1 or 2 again. If $f_k<n$ for some $k$
then by Wirsing's Lemma~\ref{hiw} the polynomial $Q_k$
has an irreducible factor $W_k$
(possibly equal to $Q_k$)
of degree smaller than $n$ and approximation quality $\wo$, i.e. 
\eqref{eq:enough} holds for $P=W_k$. Moreover again by Remark~\ref{lamm} we may assume $H(W_k)\to\infty$.
We again can start with $W_k$ instead of $P_k$ and land in cases 1 or 2, 
again since \eqref{eq:enough} is sufficient in these cases.

The proof of Theorem~\ref{t1} is complete for $n\leq 5$. 
We explain how Theorem~\ref{t2} follows from the above exposition.

\subsection{Proof of Theorem~\ref{t2} for $n\leq 5$}

In cases 3, 4 of the proof of Theorem~\ref{t1} the claim is trivial
as we can just take the polynomials $P_k$.
In cases 1,2 when $P_k$ has degree less than $n$,
we have shown above that for every large $k$, 
many polynomials $S_{\ell}\in \mathcal{S}_{k}$ (resp. $\widetilde{S}_{\ell}\in \widetilde{\mathcal{S}}_k$)
are irreducible of degree precisely $n$ 
and satisfy \eqref{eq:hirte} and \eqref{eq:kuh} 
(resp. \eqref{eq:A} and \eqref{eq:B}) for arbitrarily
small $\delta>0$, $\delta^{\prime}>0$ and $\epsilon>0$. The claim 
follows here as well. Theorem~\ref{t2} is proved for $n\leq 5$.

Analyzing the proof, we see that the case \eqref{eq:n-1} works
for arbitrary $n$, and yields the following assertion in the spirit 
of Theorem~\ref{t2}.

\begin{theorem}
	Let $n\geq 1$ be an integer, $\xi$ a real transcendental number
	and $\varepsilon>0$.
	Then there are inifinitely many integer polynomials $P$ of degree
	exactly $n$ and without linear factor over $\mathbb{Z}[T]$ 
	that satisfy $|P(\xi)|\leq H(P)^{-\wo+\varepsilon}$.
\end{theorem}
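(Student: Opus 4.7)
The plan is to re-run the construction of Sections~\ref{case1}--\ref{case2}, but to track only those primes $\ell$ for which $S_\ell$ acquires a \emph{linear} factor over $\mathbb{Z}[T]$. Controlling this narrower set needs only Theorem~\ref{t4}, which holds for every $n$, and not the pigeon-hole estimate Lemma~\ref{lemur} responsible for the restriction $n\leq 5$ in the proof of Theorem~\ref{t2}. Hence the argument extends to arbitrary $n$.

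First I would apply Lemma~\ref{hiw} to the polynomials provided by the definition of $\wo$ to obtain an infinite sequence of irreducible integer polynomials $P_k$ of degrees $u_k\leq n$, heights $H_k\to\infty$, satisfying \eqref{eq:enough}; we may assume $\wo>n$. For each large $k$ I would pick a companion polynomial $Q_k$ of degree $f_k\leq n$, coprime to $P_k$, satisfying \eqref{eq:duenn}, and form the family $\mathcal{S}_k=\{\,S_\ell=\ell T^{n-u_k}P_k+Q_k : \ell\in\mathscr{L}\,\}$ over the primes $\ell\leq H_k^{\delta}$. By the Prime Number Theorem, $\sharp\mathscr{L}\gg H_k^{\delta}/\log H_k$, and all but at most one $S_\ell$ have degree exactly $n$ and satisfy \eqref{eq:hirte}, \eqref{eq:kuh}.

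The key step, which is the observation suggested by ``the case \eqref{eq:n-1} works for arbitrary $n$'', is to show that only $\ll H_k^{o(1)}$ indices $\ell\in\mathscr{L}$ give $S_\ell$ with a linear factor. In Case 1 ($u_k<n$, $f_k<n$) this is a direct application of Theorem~\ref{t4} to $P=T^{n-u_k}P_k$ (with $P(0)=0$) and $Q=Q_k$ (with $\deg Q<n$). In Case 2 ($u_k<n$, $f_k=n$) Theorem~\ref{t4} does not apply directly, so I would invoke the pigeon-hole argument of Section~\ref{case2}: writing a potential linear factor as $B_\ell=q_\ell T-p_\ell$ with $\gcd(p_\ell,q_\ell)=1$, equation \eqref{eq:det} forces $p_\ell\mid d_0$, and combined with the observation that $|B_\ell(\xi)|\leq 1$ holds for only $\ll_\xi H_k^{o(1)}$ indices, this limits the exceptional $\ell$. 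For any remaining $\ell_0$ with $|B_{\ell_0}(\xi)|>1$, I would replace $Q_k$ by the degree-$(n-1)$ cofactor $A_{\ell_0}$, which is coprime to $P_k$ by Proposition~\ref{aktiv} and inherits $|A_{\ell_0}(\xi)|\ll H_k^{-\wo+\varepsilon}$, bringing us back into Case 1 via the modified family $\widetilde{\mathcal{S}}_k$. Cases 3 and 4 ($u_k=n$) are reduced to Cases 1 or 2 exactly as in the corresponding subsections of Section~\ref{n5}: either $Q_k$ is itself irreducible of degree $n$ and is already an admissible polynomial, or Lemma~\ref{hiw} furnishes an irreducible factor of degree smaller than $n$ to play the role of $P_k$.

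Subtracting the $\ll H_k^{o(1)}$ exceptional primes from the $\gg H_k^{\delta}/\log H_k$ total leaves $\gg H_k^{\delta}/\log H_k$ indices $\ell\in\mathscr{L}$ for which $S_\ell$ has degree exactly $n$, no linear factor over $\mathbb{Z}[T]$, and $|S_\ell(\xi)|\ll H(S_\ell)^{-\wo+\varepsilon}$ once $\delta,\epsilon$ are chosen small enough. Letting $k\to\infty$ produces infinitely many such polynomials. The main obstacle of the Theorem~\ref{t2} proof---controlling the $\textbf{J}^{(1)}$-type factors of intermediate degree, which forced $n\leq 5$---is simply avoided here, because a reducible $S_\ell$ that has no linear factor still satisfies the conclusion; this is exactly why the bound extends to every $n\geq 1$.
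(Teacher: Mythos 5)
Your proposal is correct and is essentially the proof the paper intends (the paper itself only remarks that ``the case \eqref{eq:n-1} works for arbitrary $n$''): you correctly isolate the point that excluding \emph{linear} factors needs only Theorem~\ref{t4} in Case~1 and the pigeonhole/replacement argument of Section~\ref{case2} in Case~2, neither of which restricts $n$, while the restriction $n\leq 7$ stems solely from Lemma~\ref{lemur}'s control of higher-degree factors, which is irrelevant here. The only cosmetic slip is in Cases~3--4, where the admissible polynomial is most directly $P_k$ itself (irreducible of exact degree $n$), but your reduction reaches the same conclusion.
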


\section{Preparations for $n=6$ and $n=7$}  \label{tow}

We prepare the proof for $n=6$ and $n=7$.
Let $n\leq 7$. Let $P_k, Q_k$ and the polynomials $S_{\ell}$ and $R_{\ell}$
be as in the proof for $n\leq 5$
and assume for the moment we are in case 1 of this proof,
i.e. $\deg(P_k)<n$ and $\deg(Q_k)<n$. 
We again estimate $\sharp \textbf{I}$. For each $k$, we now split
\[
\textbf{I}= \textbf{I}^{(1)} \cup \textbf{I}^{(2)} 
\cup \textbf{I}^{(3)}
\]
where
\begin{align*}
\textbf{I}^{(1)}&=\{ \ell \in  \textbf{I}: 1\leq \deg(A_{\ell})\leq n-3 \}, \\ \textbf{I}^{(2)}&=\{ \ell \in  \textbf{I}: \deg(A_{\ell})= n-2 \}, \\
\textbf{I}^{(3)}&=\{ \ell \in  \textbf{I}: \deg(A_{\ell})= n-1 \}.
\end{align*}

For estimating $\textbf{I}^{(1)}$, 
resp. $\textbf{I}^{(3)}$, essentially
the methods for $n\leq 5$ regarding estimation of 
$\textbf{J}^{(1)}$, resp. $\textbf{J}^{(2)}$,
work. For $\textbf{I}^{(3)}$ the exact same
argument yields $\sharp \textbf{I}^{(3)}\ll H_k^{o(1)}$. 
If $\deg(A_{\ell})\leq n-3$, then
we now have $2 \deg(A_{\ell}) -1 \leq 2(n-3)-1\leq n<\wo$ for $n\leq 7$ 
and again from Lemma~\ref{lemur} verify there are at most $\ll \log H_k$ solutions to \eqref{eq:dd}. This yields $\sharp \textbf{I}^{(1)}\ll \log H_k$.

We are left to estimate $\textbf{I}^{(2)}$. So
let $\ell\in \textbf{I}^{(2)}$, that is the polynomial
$S_{\ell}=A_{\ell}B_{\ell}$ splits as
\begin{equation}  \label{eq:AB}
\deg(A_{\ell})=n-2, \qquad \deg(B_{\ell})=2.
\end{equation}

Assume we have shown only 
\begin{equation}  \label{eq:scharp}
\sharp \textbf{I}^{(2)}\ll H_k^{o(1)}
\end{equation}
such indices
occur.
Then in total $\sharp \textbf{I}\leq \sharp \textbf{I}^{(1)}+\sharp \textbf{I}^{(2)}+ \sharp \textbf{I}^{(3)}\ll H_k^{o(1)}$ and thus
$\sharp (\mathscr{L}\setminus \textbf{I})= \sharp \mathscr{L}-\sharp \textbf{I}\gg H_{k}^{\delta}/ \log H_{k}$ many $\ell\in \mathscr{L}$ remain where
$S_{\ell}$ is irreducible resp. $\ell\in \mathscr{R}$ where $R_{\ell}$
irreducible, i.e. Theorem~\ref{t3} holds for $n=6$ and $n=7$.
Then we may again take any two such indices in $\mathscr{L}\setminus \textbf{I}$ and
conclude with Theorem~\ref{wbs} as in the last paragraph of 
\S~\ref{case1}.

It will be more convenient to write
$\textbf{I}_1=\textbf{I}^{(2)}$ in the sequel.
We prove \eqref{eq:scharp} indirectly.
We fix arbitrary $\gamma>0$, assume $\sharp \textbf{I}_1>H_k^{\gamma}$ for infinitely many $k$, and lead this to a contradiction
to finish the proof. For convenience
we prefer to separately consider $n=6$ and $n=7$, however we want to 
remark that it would be feasible to combine the arguments from \S~\ref{n6} and \S~\ref{s7} below to cover both cases at once.

\section{Proof of Theorems~\ref{t1},~\ref{t2} for $n=6$}  \label{n6}

We start from the observations of the last section.
So fix large $k$, assume $\deg(P_k)<n$, and \eqref{eq:AB} holds for a large set
of indices $\ell\in \textbf{I}_1$ with $\sharp\textbf{I}_1>H_k^{\gamma}$.
By a pigeon hole principle argument similar to the proof of  
Lemma~\ref{lemur},
we can show there are many indices within $\textbf{I}_1$
for which the respective induced $A_{\ell}$ have roughly the same height.

\textbf{Claim 1}: Let $\varepsilon>0$.
There are still $\gg_{\varepsilon} H_{k}^{\gamma}/ \log H_k$ many 
primes $\ell\in \textbf{I}_1$ with 
\begin{equation} \label{eq:F}
\tilde{X} \leq H(A_{\ell}) \leq \tilde{X}^{1+\varepsilon},
\end{equation}
for some fixed $\tilde{X}>1$.

We prove the claim. Recall the upper 
bound \eqref{eq:geller} for $H(A_{\ell})$, and let $c=c(n)$ be the implied constant. Accordingly,
we take any $X_0>1$ not too small and partition
$[X_0,cH_{k}^{1+\delta}]$ into successive intervals $J_1,\ldots,J_{\sigma(k)}$
of the form $[X,X^{1+\varepsilon}]$ starting at $X=X_0$, i.e.
\[
J_{i}= [X_0^{(1+\varepsilon)^{i-1}}, X_0^{(1+\varepsilon)^{i}}],\qquad\quad
1\leq i\leq \sigma(k),
\]
that pairwise have at most an endpoint in common. 
We may alter the right endpoint of the last $J_{\sigma(k)}$ 
to $H_k$ if needed, making it smaller. Obviously
\[
\sigma(k)\ll_{\varepsilon} \log_{1+\varepsilon} 
(\log(cH_{k}^{1+\delta})/\log X_0)\ll_{\varepsilon} \log\log H_{k}.
\]
Hereby we used $\delta<1$ to get rid of the dependence on $\delta$.
The total number of $\ell$ in $\textbf{I}_1$ is $\sharp \textbf{I}_1>H_{k}^{\gamma}$, and there
are possibly at most $\leq (2X_0+1)^{n+1}\ll_{n} 1$ remaining polynomials of
height $\leq X_0$ i.e. not in any such interval $J_{i}$. Hence,
by pigeon hole principle, there must 
be $\gg \sharp \textbf{I}_1/ \sigma(k)\gg_{\varepsilon} H_k^{\gamma}/ \log\log H_k$
indices $\ell\in \textbf{I}_1$ with $H(A_{\ell})$ in the same interval $J_{i}=:J$, for some $1\leq i\leq \sigma(k)$.
The claim follows by taking $\tilde{X}$ the left 
endpoint of $J$.
For the sequel we may consider $\varepsilon= (\wo/n-1)/2>0$ 
fixed and $\delta$ small enough fixed
and the implied constant in Claim~1 absolute.

Restrict to indices $\ell$ of Claim~1 satisfying \eqref{eq:F} 
and call the index set $\textbf{I}_2\subseteq \textbf{I}_1$ which we have seen has cardinality $\sharp \textbf{I}_2 \gg H_k^{\gamma}/ \log H_k$.
Write
\[
A_{\ell}(T)= a_{n-2}T^{n-2} + \cdots+ a_{0},\qquad \ell \in \textbf{I}_2,
\]
where the coefficients $a_i=a_i(\ell)$ depend on $\ell$ but for every $\ell$ we
have $a_{n-2}\neq 0$ and $a_{0}| d_{0}$ where $d_0= Q_k(0)=S_{\ell}(0)$ is the
constant coefficient of $Q_k$, since $T^{n-u_k}P_{k}(0)=0$. 
By the argument of \S~\ref{case2}
we can assume $d_0\neq 0$, so that by $|d_0|\leq H(Q_k)<cH_k<H_k$
and Lemma~\ref{rop}, including sign, again it 
has only $2\tau(d_0)\ll |d_0|^{o(1)} \leq H_{k}^{o(1)}$ many divisors.
Hence by pigeon hole principle there are still 
$\sharp \textbf{I}_2/ (2\tau(d_0))\gg \sharp \textbf{I}_2/ H_k^{o(1)}
\gg H_{k}^{\gamma-o(1)}$ many $\ell\in \textbf{I}_2$ inducing the same constant
coefficient $a=a_0$ of $A_{\ell}$. Call this index subset $\textbf{I}_3\subseteq \textbf{I}_2$ of cardinality $\sharp \textbf{I}_3 \gg H_{k}^{\gamma-o(1)}$
and restrict to such indices $\ell\in \textbf{I}_3$ in the sequel.
For $R_{\ell}$ instead, we have $a_{n-2}|c_n$ and $c_0\ne 0$, and the same 
estimate $\sharp \textbf{I}_3 \gg H_{k}^{\gamma-o(1)}$ is inferred
for the accordingly altered set $\textbf{I}_3\subseteq \textbf{I}_2$ of polynomials with the same fixed leading coefficient (for simplicity we use the same notation for these sets).

For simplicity, fix $\ell_0\in \textbf{I}_3$ that 
maximizes $H(A_{\ell})$ among
$\ell\in \textbf{I}_3$ and write $A=A_{\ell_0}$ and $H=H(A)= \max_{\ell \in \textbf{I}_3} H(A_{\ell})$.
For any other $\ell \neq \ell_0$ in $\textbf{I}_3$ we can consider 
for $S_{\ell}$ the polynomial
\[
G_{\ell}(T)= G_{\ell_0, \ell}(T) = \frac{ A(T) - A_{\ell}(T) }{T} \in \mathbb{Z}[T],
\]
and for $R_{\ell}$ instead
\[
G_{\ell}(T)= G_{\ell_0, \ell}(T) =  A(T) - A_{\ell}(T) \in \mathbb{Z}[T].
\]
In either case, by \eqref{eq:AB} it satisfies
\begin{equation}  \label{eq:dregs}
\deg(G_{\ell})\leq (n-2)-1=n-3, \qquad H(G_{\ell})\leq H+H(A_{\ell})\leq 2H,
\end{equation}
and by \eqref{eq:horse} and \eqref{eq:F} we can estimate
\begin{equation}  \label{eq:ungnad}
|G_{\ell}(\xi)| \ll_{\xi} \max \{ |A(\xi)| , |A_{\ell}(\xi)|  \}
\ll H^{-\wo+\varepsilon_2}\ll_{n} H(G_{\ell})^{-\wo+\varepsilon_3}. 
\end{equation}
Here and below all $\varepsilon_i$ are positive and arbitrarily small
as soon as $\varepsilon, \epsilon, \delta$ are small enough. 
Moreover note that all $G_{\ell}$
are pariwise distinct for different $\ell\in \textbf{I}_3$ and 
not identically $0$, since $A_{\ell}$ divides $S_{\ell}$ and by Proposition~\ref{aktiv}.

Now the irreducible $U_1(T)= A(T)$ and the polynomial
$U_2(T)=G_{\ell}(T)$ are clearly coprime by Proposition~\ref{aktiv},
and of degrees 
at most $n-2$ and $n-3$ resp. for all $\ell\in \textbf{I}_3$.
On the one hand, from \eqref{eq:horse}, \eqref{eq:geller}, \eqref{eq:dregs}, \eqref{eq:ungnad} we see
\begin{equation}  \label{eq:1111}
 H(U_i)\ll H^{1+\varepsilon_4}, \qquad |U_{i}(\xi)| \ll H^{-\wo+\varepsilon_5}, \qquad\qquad i=1,2.
\end{equation}
On the other hand, from Lemma~\ref{sb}
we get 
\begin{equation} \label{eq:stolt}
\max_{i=1,2} |U_{i}(\xi)| \gg_{n,\xi} H^{-(n-2)-(n-3)+1}= H^{-(2n-6)}.
\end{equation}
Combining with \eqref{eq:1111} yields $2n-6\geq (\wo-2\varepsilon_{5})/(1+\varepsilon_4)$ when we assume $H$ is sufficiently large. However, for $n\leq 6$ and small enough $\delta$ (thus $\varepsilon_4, \varepsilon_5$), this contradicts the assumption $\wo>n$. 
We should remark that potentially it can happen that $G_{\ell}$ 
is constant in which case we cannot apply Lemma~\ref{sb}. However,
since it cannot be identically $0$ (by Proposition~\ref{aktiv})
we then have the bound $\max_{i=1,2} |U_{i}(\xi)|\geq 1$ and the argument works as well since $\wo\ge n>0$.

Thus we have shown Theorem~\ref{t3} for $n=6$ in case 1.
The reduction of the other cases to case 1
and the deduction of Theorem~\ref{t1} via Theorem~\ref{wbs} is done 
precisely as for $n\leq 5$. The deduction of Theorems~\ref{t2}
follows analogously to $n\leq 5$ as well.

\section{Proof of Theorems~\ref{t1},~\ref{t2} for $n=7$} \label{s7}

Again we start from \S~\ref{tow} and want to contradict
$\sharp \textbf{I}_1\gg H_k^{\gamma}$.
We again consider the set $\textbf{I}_3\subseteq \textbf{I}_1$ 
from \S~\ref{n6}, that by the same line of arguments would 
still satisfy $\sharp \textbf{I}_3\gg H_k^{\gamma-o(1)}$.
First consider the polynomials $S_{\ell}$.
Write now $a_{n-2,\ell}$ for the leading coefficient of any $A_{\ell}$
for $\ell\in \textbf{I}_3$
to highlight the dependence on $\ell$. 
Let $c_{k,t}$ be the leading coefficient of 
$P_{k}(T)=c_{k,t}T^{t}+c_{k,t-1}T^{t-1}+\cdots+c_{k,0}$
of degree $t=t(k)<n$.
First notice that $a_{n-2,\ell}$ divides
$\ell c_{k,t}$, , for every $\ell$, since we assume $\deg(Q_k)<n$. 
We can assume $c_{k,t}>0$. Hence, since $\ell$ is prime, 
either $a_{n-2,\ell}|c_{k,t}$
or $a_{n-2,\ell}= \ell g_{\ell}$ where
$g_{\ell}|c_{k,t}$. 
Write $h_{\ell}$ for either $a_{n-2,\ell}$ or $g_{\ell}$ in the respective
cases. Notice the set of distinct $s_\ell$
that may appear in total has cardinality 
$\leq 2\tau(c_{k,t})\ll c_{k,t}^{o(1)}\leq H_k^{o(1)}$ as $k\to\infty$
by Lemma~\ref{rop}. Thus by pigeon hole principle
there remains a set $\textbf{I}_4 \subseteq \textbf{I}_3$
with cardinality $\sharp \textbf{I}_4 \geq \sharp \textbf{I}_3/ H_{k}^{o(1)}\gg H_k^{\gamma-o(1)}$ with the property that
both the constant coefficient $a_{0}=a_{0, \ell}$ and $h_{\ell}=:h_0$ 
coincide within the class $\ell\in \textbf{I}_4$.

Now consider $R_{\ell}$ and recall that now these polynomials $R_{\ell}$ have the
	same leading coefficient $a_{n-2,\ell}$ for all $\ell\in \textbf{I}_3$.
    Moreover $a_{0,\ell}$ divides $\ell d_0$ since $n-u_k>0$ implies $R_{\ell}(0)=\ell Q_k(0)=\ell d_0$. Recall $d_0\ne 0$ by \eqref{eq:wemay}.
    Thus again either $a_{0,\ell}|d_0$ or $a_{0,\ell}=\ell g_{\ell}$ 
    where $g_{\ell}|d_0$, and again writing $h_{\ell}$ for either
    $a_{0,\ell}$ or $g_{\ell}$,
     an analogous argument yields that at least
     $\gg H_k^{\gamma-o(1)}$ many $h_{\ell}$ appear in total.  
Again both the constant coefficients and $h_{\ell}$ 
coincide for all polynomials within $\textbf{I}_4$.  

Let us distinguish two cases.

\textbf{Case A:} There exist $\ell_1, \ell_2 \in \textbf{I}_4$ 
where the first case above applies, i.e. $h_{\ell_i}=a_{n-2,\ell_i}$ divides $c_{k,t}$ for the polynomials $S_{\ell}$ resp. 
$h_{\ell_i}=a_{0,\ell_i}$ divides $d_{0}$ for the polynomials $R_{\ell}$,
for $i=1,2$.
Then, recalling $\deg(A_{\ell})=n-2$, it is not hard to see that 
for either $S_{\ell}$ or $R_{\ell}$, the polynomial 
\[
G_{\ell_1,\ell_2}(T)= \frac{ A_{\ell_1} - A_{\ell_2 }}{T}\in \mathbb{Z}[T]
\] 
has degree at most $n-4$ (and it is impossible for $n\le 3$ by Proposition~\ref{aktiv}). 
Moreover, very similar to \eqref{eq:ungnad}, 
from \eqref{eq:horse}, \eqref{eq:F} with $H=\max_{i=1,2} H(A_{\ell_i})$ we again see that 
\[
|G_{\ell_1,\ell_2}(\xi)| \ll_{\xi} \max \{ |A_{\ell_1}(\xi)| , |A_{\ell_2}(\xi)|  \} \ll H^{-\wo+\varepsilon_2}.
\]
Hence, with $U_1=G_{\ell_1,\ell_2}, U_2=A_{\ell_1}$ we get
\eqref{eq:1111} again.
On the other hand, as again $U_1, U_2$ clearly have no common factor
by Proposition~\ref{aktiv}, 
from Lemma~\ref{sb} we now deduce
\[
\max_{i=1,2} |U_{i}(\xi)| \gg_{n,\xi} H^{-(n-2)-(n-4)+1}= H^{-(2n-7)}.
\]
Similar as in \S~\ref{n6} we can easily deal with the case $\deg(U_i)=0$
for some $i\in\{1,2\}$ separately, where the bound $\max_{i=1,2} |U_{i}(\xi)|\geq 1$ applies.
Combining the two estimates, 
again for large $H$ (or equivalently large $k$) we get a contradiction
to $\wo>n$ if $n\leq 7$ and $\varepsilon_2$ is small enough. 

\textbf{Case B:} For at most one $\ell \in \textbf{I}_4$ we have that
 $a_{n-2,\ell}$ divides $c_{k,t}$. Take any two distinct 
 indices $\ell_1, \ell_2 \in \textbf{I}_4$
 from the complement, so that $a_{n-2,\ell_i}= \ell_i h_{\ell_i}$ for $i=1,2$.
 Then, regarding $S_{\ell}$ consider now simultaneously the polynomials
 \[
 U_{1}(T)=G_{\ell_1, \ell_2}(T), \qquad U_{2}(T)= F_{\ell_1,\ell_2}(T):= \ell_2 A_{\ell_1} - \ell_1 A_{\ell_2 }\in \mathbb{Z}[T],
 \] 
and for $R_{\ell}$ instead
 \[
 U_{1}(T)=A_{\ell_1}(T)- A_{\ell_2}(T), \qquad
 U_2(T):=F_{\ell_1,\ell_2}(T):= \frac{\ell_2 A_{\ell_1} - \ell_1 A_{\ell_2 }}{T}\in \mathbb{Z}[T].
 \]
 Notice in the first case (for $S_{\ell}$) by construction the 
 leading coefficient with index $n-2$ vanishes.
 Thus in either case, both $U_i$ have degree at most $n-3$ and it is easy to verify, by
 a similar argument as in the proof of Proposition~\ref{aktiv}, that $U_1$ and $U_2$ have
 no non-constant common factor.
 Since $\ell_i\leq H_k^{\delta}$, we can estimate the height 
 of the polynomial $U_2$ via
 \begin{equation}  \label{eq:FRI}
 H(U_2) \leq \ell_2 H(A_{\ell_1})+ \ell_1 H(A_{\ell_2})
 \ll H_k^{\delta}\cdot \max_{i=1,2} H(A_{\ell_i}).
 \end{equation}
 The two different height notions $H_k$ and $H(A_{\ell_i})$ on the right hand side are disturbing, so
assume for the moment we have shown the following claim relating 
these quantities, to be justified
at the end of this section.

\textbf{Claim 2}: 
There exists $\Lambda>0$ such that for all large $k$ and $\ell\in \textbf{I}_4\setminus \{ \tilde{\ell}\}$ 
up to at most one exception $\tilde{\ell}=\tilde{\ell}(k)$ for each $k$,
we have at least one of the inequalities
\[
H(A_{\ell}) \geq H_k^{\Lambda}, \quad \text{or} \quad H(B_{\ell}) \geq H_k^{\Lambda}.
\]

We may assume the left inequality holds, the argument for the right
is very similar noticing that $B_{\ell}$ is irreducible as well
since it is quadratic and we can assume $S_{\ell}=A_{\ell}B_{\ell}$ has no linear
factor by Theorem~\ref{t4} (upon slightly shrinking $\textbf{I}_4$ if needed).
We may further assume $\ell_1, \ell_2$ are distinct from $\tilde{\ell}$
in Claim 2. 
Then, choosing $\delta$ small enough compared to $\Lambda$, from \eqref{eq:FRI} for arbitrarily small $\varepsilon_6>0$
and large $k$, we can guarantee
\[
H(U_2) \ll (\max_{i=1,2} H(A_{\ell_i}))^{1+\frac{\delta}{\Lambda}}\ll (\max_{i=1,2} H(A_{\ell_i}))^{1+\varepsilon_6}.
\]
Moreover, from \eqref{eq:horse} we see
\[
| U_{2}(\xi)| \leq \ell_2 |A_{\ell_1}(\xi)| + \ell_1 |A_{\ell_2}(\xi)|
\leq 2H_k^{\delta}\max_{i=1,2} |A_{\ell_i}(\xi)| 
\ll_{\xi} (\max_{i=1,2} H(A_{\ell_i}))^{ -\wo + 2\delta^{\prime}+ \frac{\delta}{\Lambda} }.
\]
Similar, in fact stronger, estimates apply to $U_1$.
Hence for our $U_1, U_2$, with $H=\max_{i=1,2} H(U_i)$,
again we verfiy \eqref{eq:1111} when taking small
enough $\delta>0$. On the other hand, from Lemma~\ref{sb} we again get
\[
\max_{i=1,2} |U_{i}(\xi)| \gg_{n,\xi} H^{-(n-3)-(n-3)+1}= H^{-(2n-7)}.
\]
Again we can easily deal with the case $\deg(U_i)=0$
for some $i\in\{1,2\}$ separately, then $\max_{i=1,2} |U_{i}(\xi)|\geq 1$.
Now combining, for large $H$ we get a contradiction
to $\wo>n$ if $n\leq 7$. Thus Theorem~\ref{t3} is shown for $n=7$ in case 1,
and the reduction of the other case 2, 3, 4 to case 1
and the deduction of Theorem~\ref{t1} via Theorem~\ref{wbs} is done 
precisely as for $n\leq 5$ again. The deduction of Theorem~\ref{t2}
follows analogously to $n\leq 5$ as well.

We finish by verifying Claim 2. Assume that for 
arbitrarily small given $\Lambda>0$ and certain large $k$, for 
$\ell\in \textbf{I}_4 \setminus \{ \tilde{\ell}\}$ 
we have
\begin{equation}  \label{eq:Kk}
H(A_{\ell}) < H_k^{\Lambda}.
\end{equation}
First notice that we can assume, again up to one possible exceptional 
index $\tilde{\ell}$, that
\[
| A_{\ell}(\xi)| \gg_{n,\xi} H(A_{\ell})^{-2n}.
\]
Indeed, for arbitrarily small
$\varepsilon_7>0$ 
since the $A_{\ell}$ are pairwise coprime by Proposition~\ref{aktiv},
Lemma~\ref{sb} and \eqref{eq:F} yield that for any two distinct $\ell$
at least one must satisfy
\begin{equation} \label{eq:beggin}
| A_{\ell}(\xi)| \gg_{n,\xi} H(A_{\ell})^{-2(n-2)+1 - \varepsilon_7}
= H(A_{\ell})^{-2n+5 - \varepsilon_7} > H(A_{\ell})^{-2n}.
\end{equation} 
If $n\le 5$, for small enough $\varepsilon_7>0$ we get a direct
contradiction to \eqref{eq:dd} since then $\theta>2n-5+\varepsilon_7$
as $\theta>n$ is fixed. Thus we may assume $n\ge 6$.
For any $\varepsilon_8>0$, which we can choose unaffected
from the choice of $\delta$, choosing $\Lambda< \varepsilon_{8}/(2n)$,
combining \eqref{eq:Kk}, \eqref{eq:beggin}, we get
\begin{equation}  \label{eq:melk}
| A_{\ell}(\xi)| \gg_{n,\xi} H_k^{- 2n\cdot \Lambda  }
\gg H_k^{-\varepsilon_8}.
\end{equation}
However, this means that the quadratic cofactor $B_{\ell}$ must have very small evaluation at $\xi$. Concretely, from \eqref{eq:kuh}, \eqref{eq:melk} and since $H(B_{\ell})\ll_{n} H(S_{\ell})\ll H_k^{1+\delta}$ by Gelfond's Lemma~\ref{gel},
for any $\varepsilon_9>0, \varepsilon_{10}>0$ and small 
enough $\delta>0$ we get
\[
| B_{\ell}(\xi)|=\frac{|S_{\ell}(\xi)|}{|A_{\ell}(\xi)|} \ll_{n,\xi} H_k^{-\wo+\varepsilon_9}\ll_{n} H(B_{\ell})^{-\wo+\varepsilon_{10}},
\qquad \ell\in \textbf{I}_4.
\]
	Note that this is the same estimate we have for $A_{\ell}$. On the
	other hand, Gelfond's Lemma~\ref{gel} and \eqref{eq:001a}, \eqref{eq:Kk}, when choosing $\Lambda<1/4$ imply 
\[
H(B_{\ell})=H(S_{\ell}/A_{\ell})\gg_n \frac{H(S_{\ell})}{H(A_{\ell})}\gg H_k^{1-\Lambda}>H_k^{1/2}>H_k^{\Lambda},
\]
and for $R_{\ell}$ similarly from Gelfond's Lemma~\ref{gel} and \eqref{eq:001b}, \eqref{eq:Kk} we get
\[
H(B_{\ell})=H(R_{\ell}/A_{\ell})\gg_n \frac{H(R_{\ell})}{H(A_{\ell})}\gg H_k^{1/2-\Lambda}>H_k^{\Lambda}.
\]

\begin{remark}  \label{r2}
		Analyzing the proof for $n=7$ in the sections above, we see that we only used $\wo>\max\{ 2n-7,0\}$, a consequence of $\wo>n$. Indeed, take $P,Q$ any polynomials as in Case 1 and without common factor of height at most $H$ satisfying
	\[
	\max\{ |P(\xi)|, |Q(\xi)|\} \le H^{-\mu},
	\]
	for some $\mu>\max\{ 2n-7,0\}$.
	 Indeed, using $\mu>2n-7$, the cases $\ell\in\textbf{I}^{(1)}$ and $\ell\in\textbf{I}^{(3)}$
	work precisely as in~\S~\ref{small}-\ref{n5}. When $\ell\in\textbf{I}^{(2)}$
	the only place below where we would require a stronger property
	is regarding \eqref{eq:stolt} in~\S~\ref{n6}. However, this part may be omitted as a refined argument superseeding~\S~\ref{n6} is provided in~\S~\ref{s7}. Indeed 
	the argument in~\S~\ref{s7} starts right after the paragraph in~\S~\ref{n6} where $\textbf{I}_3$ is defined.
	So, in the most critical case $\ell\in \textbf{I}^{(2)}$,
	we proceed as in \S~\ref{s7} and for analogously defined polynomials $U_1, U_2$, using Lemma~\ref{sb} we end up with the condition 
	\begin{equation} \label{eq:ipp}
	\max_{i=1,2} |U_{i}(\xi)| \gg_{n,\xi} H^{-(2n-7)}.
	\end{equation}
	On the other hand, as in the proof above 
	we may estimate the left hand side from 
	above by $\ll H^{-\mu+\epsilon}$, for small enough $\ell$ in terms
	of any given $\epsilon>0$.
	So as before, we require $\mu>2n-7$ for the contradiction.
	Again, in the case that some $U_i$ is constant, which implies $n<7$, where we cannot apply
	Lemma~\ref{sb} in the deduction, we may estimate $\max |U_i(\xi)|\ge 1$ in place of \eqref{eq:ipp}. Then we see that $\mu>0$ suffices for the contradiction.
Thus, a safe condition for all cases is $\mu>\max\{ 2n-7,0\}$.
\end{remark}

\begin{remark} \label{r3}
	A slight modification of the arguments in \S~\ref{s7} yields
	another proof of Theorem~\ref{H} for $n\leq 5$, or more
	generally for the case $\wo>2n-5$, that avoids case distinctions
	and the restriction to primes $\ell$
	(however, these conditions are still needed for $n\in\{6,7\}$).
	Then the most intricate case is \eqref{eq:n-1}, where 
	notably we cannot apply Theorem~\ref{t4} if $\ell$ is not prime.
	However, this case can
	be handled by considering the polynomials $G_{\ell_1,\ell_2}$
	and $F_{\ell_1,\ell_2}$ from~\S~\ref{s7} with the above argument.
	A minor adaption of the proof (slightly redefining
	$G_{\ell_1,\ell_2}, F_{\ell_1,\ell_2}$) is needed when not restricting $\ell$ to be prime. 
	In fact, we can even start
	with polynomials of the form 
	$M_{\ell_1, \ell_2}=\ell_1 T^{n-u_k}P + \ell_{2}Q$
	with coprime index pair, in place of $S_{\ell}$ or $R_{\ell}$. 
	We only sketch the adaptions to be made to obtain the last
	result, which will be the key to the proof of Theorem~\ref{t6}, (iii). Firstly, the analogue
	of \eqref{eq:hase} applies to  $M_{\ell_1, \ell_2}$ as well.
	Moreover, a variant of Proposition~\ref{aktiv} for $M_{\ell_1,\ell_2}, M_{\ell_1^{\prime},\ell_2^{\prime}}$ still
	works as soon as the pairs $(\ell_1, \ell_2)$ and $(\ell_1^{\prime},\ell_2^{\prime})$ are linearly independent.
	Next we derive an irreducible divisor $A_{\ell_1,\ell_2}$ of $M_{\ell_1,\ell_2}$ very similarly that 
	satisfies the estimate \eqref{eq:horse}. We again get
	many index pairs for which these polynomials
	have roughly the same
	height, as in the claim in \S~\ref{n6}. Furthermore,
	we still have that the constant coefficient of any $A_{\ell_1,\ell_2}$
	divides $\ell_2 d_0=\ell_2 Q_k(0)$ and its leading coefficient divides $\ell_1 c_{k,t}$ with $c_{k,t}$ 
	the leading coefficient of $P_k$.
	For any such pair
	$A_{\ell_1,\ell_2}, A_{\ell_1^{\prime},\ell_2^{\prime}}$
	we then again consider suitable integer linear combinations 	
	with small integers
	(and dividing by $T$ where applicable) to derive polynomials
	$G_{\ell_1,\ell_2,\ell_1^{\prime},\ell_2^{\prime} }$ and $F_{\ell_1,\ell_2,\ell_1^{\prime},\ell_2^{\prime}}$ 
	in place of $G_{\ell_1, \ell_2}$ and $F_{\ell_1, \ell_2}$.
	In the analogous case distinction,
	now these have degrees $\deg(G_{\ell_1,\ell_2,\ell_1^{\prime},\ell_2^{\prime} })\le n-3$  and $\deg(F_{\ell_1,\ell_2,\ell_1^{\prime},\ell_2^{\prime} })\le n-1$ in Case A 
	resp. both degrees are at most $n-2$ in Case B, due to the fact that we now assume $\deg(A_{\ell_1,\ell_2})=n-1$ in place of $n-2$. In each case similarly we define coprime polynomials $U_1, U_2$.
	They now again satisfy for some small $\varepsilon=\varepsilon(\delta)>0$
	\[
	\max |U_i(\xi)|\ll H^{-\wo+\varepsilon},
	\]
	but on the other hand, using Lemma~\ref{sb},
	in both case A and case B we have
	\begin{equation} \label{eq:hexe}
	\max |U_i(\xi)|\gg H^{-(2n-5)-\varepsilon}.
	\end{equation}
	For small enough $\delta$ and thus $\varepsilon$, this will again give a contradiction to our assumption $\wo>2n-5$.
	Similar to the case $n=3$ for $\wo>2n-7$,
	a small twist of the argument occurs when some $U_i$ become constant,
	as then Lemma~\ref{sb} cannot be applied so \eqref{eq:hexe} does not apply. Indeed, if $\wo>2n-5$ and
	$n=2$, the polynomials $G_{\ell_1,\ell_2,\ell_1^{\prime},\ell_2^{\prime}}$ and $F_{\ell_1,\ell_2,\ell_1^{\prime},\ell_2^{\prime}}$ are again constant (this can only happen in case B again).
	However, again both $U_i$ cannot identically vanish, 
	so then we can use the trivial lower 
	bound $\max |U_i(\xi)|\ge 1$ in place of the stronger bound
	$H^{1-\varepsilon}$ from \eqref{eq:hexe}. This induces via Proposition~\ref{pop}
	the more restrictive condition $\theta_2=1$ in claim (iii) of Theorem~\ref{t6}. We may again alter the condition $\wo>2n-5$ in the spirit of Remark~\ref{r2} for 
	any $\mu>\max\{ 2n-5,0\}$, by the same type of arguments. 
	We omit the technical details. 
\end{remark}

\section{Proof of Theorem~\ref{t5} and Theorem~\ref{units} } \label{pt5}

Recall that	by Remark~\ref{r2}, we can apply the
argument to any pair $P,Q$ of coprime polynomials of degrees $n,m$ respectively
with $n>m$ and with $P(0)=0$,
whose evaluations at $\xi$ 
are both of modulus smaller than $H^{-(2n-7+\epsilon)}$, with $H=\max\{ H(P),\; H(Q)\}$.

\begin{proof}[Proof of Theorem~\ref{t5}]
Let $\varepsilon>0$. For simplicity first assume $m<n$.
By definition of the exponent $w_{=m}^{int}(\xi)$,
there exists a sequence of irreducible 
monic polynomials $P_k$ of degree $m$ with
\[
|P_k(\xi)| \leq 
H(P_k)^{- w_{=m}^{int}(\xi) + \varepsilon}.
\]
Write $H_k=H(P_k)$. Notice that here $\deg(P_k)=u_k=m<n$. Again the definition
of $\widehat{w}_{n-1}(\xi)$ guarantees
there is $Q_k$ of degree $f_k$ at most $n-1$, height $H(Q_k) < cH_k$ and with 
\[
|Q_k(\xi)| \ll H_k^{- \widehat{w}_{n-1}(\xi) +\varepsilon }.
\]
Choosing $c=c(n)$ from Gelfond's Lemma~\ref{gel} the polynomials $Q_k$ and $P_k$ have no common factor.
This implies that $R_{\ell}=R_{k,\ell}=T^{n-m}P_k+\ell Q_k$ for small 
$1\leq \ell\leq H_k^{\delta}$ as above satisfy
\[
H(R_{\ell})\ll H_k^{1+\delta}, \qquad
|R_{\ell}(\xi)|\ll H_k^{- \Omega_m  +\delta+\varepsilon}.
\]
Hence if we choose $\delta, \varepsilon$ small,
by our observation at the beginning of this section (essentially Remark~\ref{r2}), 
and since $\Omega_m>0$ is always true by \eqref{eq:preis}, 
as soon as
\[
\Omega_m > 2n-7, 
\]
i.e. \eqref{eq:roest} holds, the irreducibilty arguments
again work for many $R_{\ell}$.  Also notice since $u_k<n, f_k<n$
we are in case 1 of our distinction from \S~\ref{small}. 
Moreover, each $R_{\ell}$ is monic of degree exactly $n$ 
since $P_k$ is monic and
$\deg(Q_k)\leq n-1<n$. From the above \eqref{eq:weem} is readily verified.  
Estimate \eqref{eq:weem2} is inferred by  
Theorem~\ref{wbs}, with a very similar argument as in~\S~\ref{case1} above, where we need the lower bound $n-1$ in \eqref{eq:Roest} in view
of this restriction in Theorem~\ref{wbs}.
Finally if $m=n$, we cannot directly
use the above argument to exclude that many $R_{\ell}$ have
a linear factor over $\mathbb{Z}[T]$.
However, to settle this we can proceed as in \S~\ref{case2}
to reduce it to the case $m<n$, noticing that divisors
of monic polynomials are again monic.
Finally \eqref{eq:tanz}
follows from \eqref{eq:weem} if we let $m=n-1$ and trivially estimate 
$\widehat{w}_{n-1}(\xi)\geq n-1$.
\end{proof}

The case of units works in a similar way.

\begin{proof}[Proof of Theorem~\ref{units}]
We start with a sequence $P_k^{u}\in\mathbb{Z}[T]$ of degree $m$ 
polynomials 
that are irreducible, monic and 
with constant coefficient $\pm 1$
and satisfy
\[
|P_k^{u}(\xi)| \leq 
(H_k^u)^{- w_{=m}^{u}(\xi) + \varepsilon}, \qquad\qquad k\geq k_0,
\]
where $H_k^{u}=H(P_k^{u})$.
The main twist is to consider here polynomials of the form
\[
R_{\ell}^{u}(T)=R_{k,\ell}^{u}(T)=(T+1)^{n-m}P_k^{u}+\ell TQ_k^{u},\qquad
1\leq \ell\leq H_k^{\delta},
\]
where $Q_k^{u}$ minimizes $|Q_k^{u}(\xi)|$ among integer 
polynomial of degree at most $n-2$ with $0<H(Q_k^{u})<cH_{k}^{u}$,
for $c=c(n)$ from Gelfond's Lemma~\ref{gel} again.
It satisfies 
$|Q_k^{u}(\xi)|\leq (H_k^{u})^{-\widehat{w}_{n-2}(\xi)+\varepsilon }$
for large $k$, thus
for any $\ell$ as above
\begin{equation} \label{eq:ttt}
H(R_{\ell}^{u})\ll (H_k^{u})^{1+\delta}, \qquad
|R_{\ell}^{u}(\xi)|\ll (H_k^{u})^{- \Omega_m^{\prime} +\delta+\varepsilon}.
\end{equation} 
In the special case that $T+1$ divides $Q_k^{u}$, we replace
$Q_k^{u}$ by $\tilde{Q}_k^{u}$
obtained from dividing $Q_k^{u}$ through all its $T+1$ factors.
This changes neither $H(Q_k^{u})$ nor $|Q_k^{u}(\xi)|$ significantly,
for large $k$. Moreover then $(T+1)^{n-m}P_k^{u}$ and $TQ_k^{u}$ have no common linear factor, and 
hence $P_k^u$ and any $R_{\ell}^u$ are coprime.
Moreover, by construction any $R_{\ell}^{u}$ is monic with constant 
coefficient $\pm 1$, and of degree
exactly $n$.
Moreover, the induced $R_{\ell}^{u}$ are
pairwise coprime by a very similar 
short argument as in Proposition~\ref{aktiv}.
We need to guarantee irreducibility of $R_{\ell}^{u}$
for some values $\ell$. To exclude linear factors,
here we cannot use Theorem~\ref{t4} 
since for $T=0$ we have $(T+1)^{n-m}P_k^{u}(T)=P_{k}^{u}(0)=\pm 1\neq 0$. 
However, by the special form of the polynomials $R_{\ell}^{u}$,
clearly the only possible linear factors
are $T+1$ and $T-1$, and 
for given $k$ by the coprimality noticed above
each factor can appear at most for one value of $\ell$ (in fact
$T+1$ is impossible since $(P_{k}^{u}, Q_{k}^{u})=1$).
The cardinality of those indices $\ell$ inducing a splitting of
$R_{\ell}^u$ only into integer polynomial
factors of degree greater than one can be bounded exactly as
$\sharp \textbf{J}^{(2)}$ in \S~\ref{case1}. 
Hence, indeed again there are many $\ell$ for which $R_{\ell}^{u}$ is 
irreducible, as requested.
In view of the observation in the introductory paragraph of this section
(Remark~\ref{r2}) and \eqref{eq:ttt}, the claimed
estimates are deduced very similarly as those
for algebraic integers in Theorem~\ref{t5} above, where again the bound $n-1$ in \eqref{eq:Runitassu} comes from the condition in Theorem~\ref{wbs}.
\end{proof}

\section{Proof of Theorem~\ref{t6}}

Claim (i) follows from Theorem~\ref{t4}. 
For claim (ii), let $\xi= (\alpha+\beta)/2$. Then 
Proposition~\ref{pop} and \eqref{eq:bau} yield
\[
|P(\xi)| \ll_{n,\xi} H\cdot |\xi-\alpha| <
H\cdot |\alpha-\beta| \leq H^{-(2n-7)-\epsilon},
\]
and similarly
\[
|Q(\xi)| \ll_{n,\xi} H\cdot |\xi-\beta| <
H\cdot |\alpha-\beta| \leq H^{-(2n-7)-\epsilon}.
\]
In view of the introductory comments from~\S~\ref{pt5} (Remark~\ref{r2}), we may identify $\wo$ with $2n-7+\epsilon$ 
and take the according $\delta$
obtained via the proofs in previous sections
upon identifying $P(T)=T^{n-u_k}P_k(T)$ and $Q(T)=Q_{k}(T)$.
As by assumption $n-u_k>0$ and $\deg(Q_k)<n$
as well, we find ourselves in the setting of Case 1 in the 
case distinction of~\S~\ref{small}, where the irreducible
polynomials are indeed among the $S_{\ell}$ (i.e. we do not need
to ``escape'' to derived polynomials as in Cases 2,3,4).
Carefully analyzing the steps of the proof, 
in particular regarding the cardinality drop in every 
transition from some $\textbf{I}_j$ to the consecutive $\textbf{I}_{j+1}$,
we further check that for at 
most $\ll_n \tau(c_n)\tau(d_0)\log H\ll H^{o(1)}$ many prime indices $\ell$ up to $H^{\delta}$ 
the polynomial $S_{\ell}$ can be reducible. In particular
$\gg H_{k}^{\delta}/ \log H_k$ many $\ell$ remain where $S_{\ell}$
is irreducible.
For $R_{\ell}$ analogous arguments apply.
  For claim (iii), note that our condition 
  \eqref{eq:baux} and Proposition~\ref{pop} now 
  for $n\geq 3$ imply
\[
\max\{ |P(\xi)|,|Q(\xi)|\} \ll_{n,\xi} H\cdot |\xi-\alpha| <
H\cdot |\alpha-\beta| \leq H^{-(2n-5)-\epsilon},
\]
and similarly for $n=2$.
The proof then follows essentially from the 
procedure explained in Remark~\ref{r3}.
Finally, it is readily checked that the arguments work 
for $\xi\in\mathbb{C}$ since this
is true for the prerequisites Lemma~\ref{sb} and Proposition~\ref{pop}, 
so $\alpha$ and $\beta$ may be complex in claims (ii), (iii) as well.

\begin{remark}
	Minor modifications of the proof show that 
	alternative conditions to \eqref{eq:bau},
	\eqref{eq:baux} can be stated. For example for (ii) we may impose
	either $|Q(\alpha)|\leq H^{-\kappa_n+1-\epsilon}$ or $|P(\beta)|\leq H^{-\kappa_n+1-\epsilon}$ holds in place of \eqref{eq:bau}, and likewise for (iii).
\end{remark}

\section{Final comments} \label{fc}

The first case where our results are open is for $n= 8$. 
Concretely then with our argument we cannot exclude that $S_{\ell}=A_{\ell}B_{\ell}$, or $R_{\ell}=A_{\ell}B_{\ell}$,
factors as in \eqref{eq:AB} with an irreducible factor $A_{\ell}$ of degree
$6$ and quadratic irreducible $B_{\ell}$, for many $\ell$. 
Irreducibility
criteria as in our crucial auxiliary result Theorem~\ref{t3} are typically quite challenging, see the problems of Szegedy and Turan
discussed in \S~\ref{intro3}.
Our proof for $n\leq 7$ heavily relied 
on the fact that $P=P_k$ and $Q=Q_k$ are both small at some $\xi$, i.e.
have some close pair of roots.
We once again stress that when Theorem~\ref{t3} and thus
Theorem~\ref{t1} holds for some $n$,
then the bounds of Badziahin and Schleischitz~\cite{badsch} 
are applicable to $w_{=n}^{\ast}(\xi)$, which are larger than $n/\sqrt{3}$.
Just for small $n$ the bound in Theorem~\ref{H} turns out to be stronger.

\vspace{0.7cm}

{\em The author thanks the referee for the very careful reading and 
	for pointing out several inaccuracies and providing helpful suggestions. The author is further grateful to Damien Roy for advice, particularly 
	for pointing out Remark~\ref{r1}. }

\end{document}